 \newtheorem{thm}{Theorem}
 \newtheorem{cor}{Corollary}
 \newtheorem{lem}{Lemma}
 {\theoremstyle{definition}
 \newtheorem{rem}{Remark}}
 {\theoremstyle{definition}
  \newtheorem{defn}{Definition}}
 {\theoremstyle{definition}
 \newtheorem{exam}{Example}}
 {\theoremstyle{definition}
 }
 \newcommand{\mP}{\mathcal{P}}
 \newcommand{\mS}{\mathcal{S}}
  \newcommand{\mR}{\mathcal{R}}
\newcommand{\ba}{\mathbf{a}}
\newcommand{\bg}{\mathbf{g}}
 \newcommand{\C}{\mathbb{C}}
\title{Hurwitz equivalences of positive group generators}
\author{Tetsuya Ito}
\address{Graduate School of Mathematical Science, University of Tokyo, 3-8-1 Komaba Meguro-ku Tokyo 153-8914, Japan}
\email{tetitoh@ms.u-tokyo.ac.jp}
\keywords{Hurwitz equivalence, Hurwitz-conjugation equivalence, word reversing, complete presentation}
\begin{document}

\begin{abstract} 
For a positively presented group $G$, we provide a criterion for two tuples of positive group generators of $G$ to be Hurwitz equivalent or Hurwitz-conjugation equivalent. We also present an algorithmic approach to solve the Hurwitz equivalence and the Hurwitz search problems by using the word reversing method. 
\end{abstract}
 \maketitle

\section{Introduction}

Let $B_{n}$ be the braid group of $n$-strings and $\sigma_{1},\ldots,\sigma_{n-1}$ be the standard generators.
For a group $G$, we denote by $G^{n}$ the $n$-fold direct product of $G$, and we call an element of $G^{n}$ a {\it $G$-system} of length $n$.
For a fixed positive presentation $\mP=\langle \mS \:|\: \mR \rangle$ of $G$ we call an element $\mS^{m}$, a $G$-system consisting of positive generators $\mS$, a {\it generator $G$-system}.

The {\it Hurwitz action} is a right action of $B_{n}$ on $G^{n}$ defined by  
\[(g_{1},g_{2},\ldots,g_{n})\cdot \sigma_{i} = (g_{1},g_{2},\ldots, g_{i-1},g_{i+1},g_{i}^{g_{i+1}},g_{i+2},\ldots, g_{n})\]
where we denote $g_{i+1}^{-1}g_{i}g_{i+1}$ by $g_{i}^{g_{i+1}}$.
The Hurwitz action is diagrammatically represented as in Figure \ref{fig:hurwitzaction}. 
Two $G$-systems are called {\it Hurwitz equivalent} if they belong to the same orbit of the Hurwitz action.

\begin{figure}[htbp]
 \begin{center}
\includegraphics[width=60mm]{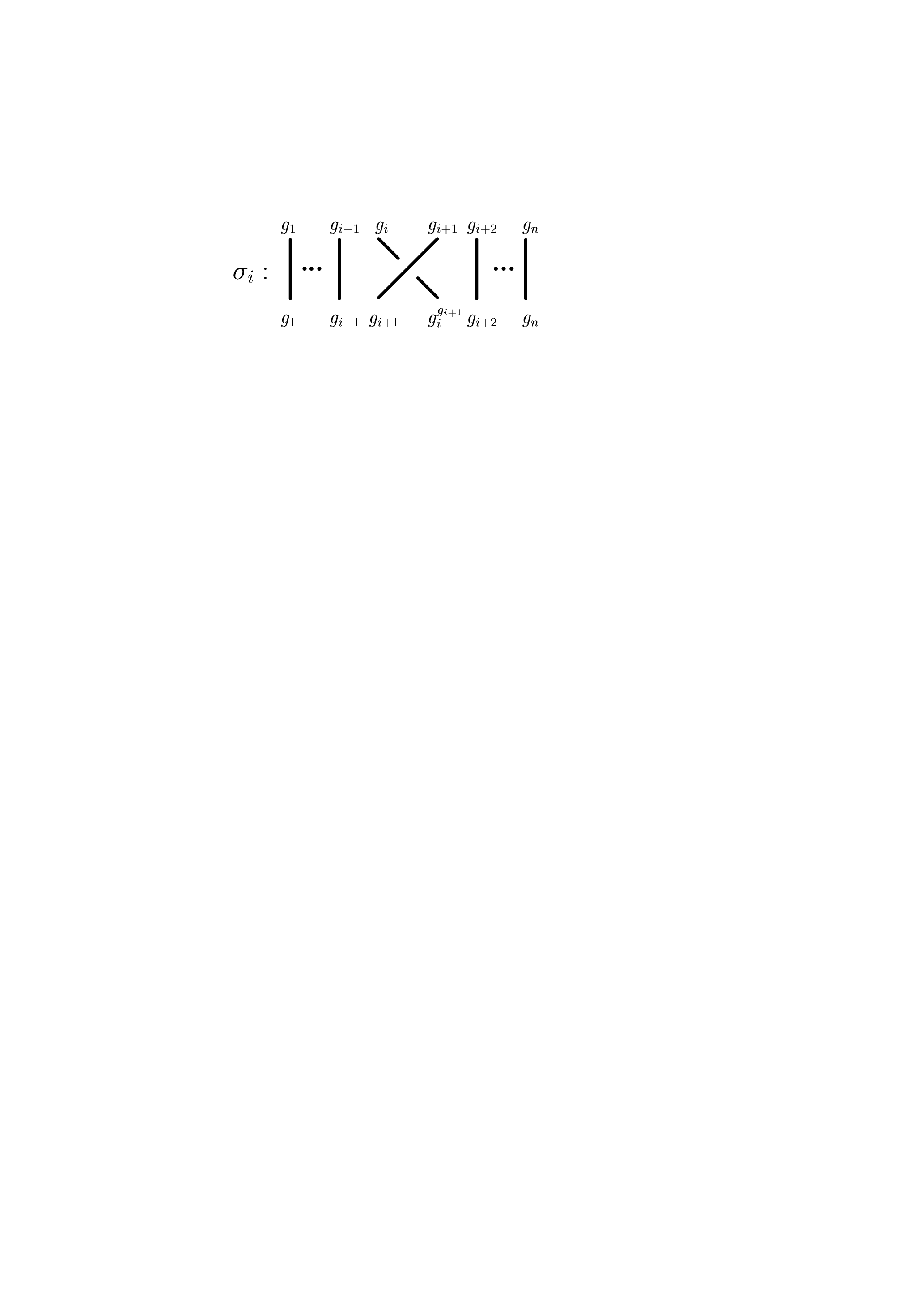}
 \end{center}
 \caption{Diagrammatic description of Hurwitz action }
 \label{fig:hurwitzaction}
\end{figure}

The group $G$ itself acts on $G^{n}$ from the right by conjugations, as 
\[ (g_{1},g_{2},\ldots,g_{n})\cdot g = (g_{1}^{g},g_{2}^{g},\ldots,g_{n}^{g}). \]
Two $G$-systems are said to be {\it conjugate} if they belong to the same $G$-orbit. The actions of $G$ and $B_{n}$ commute, so we regard the group $G \times B_{n}$ acting on $G^{n}$. We call this action the {\it Hurwitz-conjugation action} ({\it HC-action}, in short). Two $G$-systems $\bg$ and $\bg'$ are called {\it Hurwitz-conjugation equivalent} ({\it HC-equivalent}) if they belong to the same orbit of the HC-action. We denote by $\bg \sim_{H} \bg'$ (resp. $\bg \sim_{HC} \bg'$) if $\bg$ and $\bg'$ are Hurwitz (resp. HC-) equivalent.

In this paper we study the following two problems. 

\begin{description}
\item[Hurwitz equivalence problem] Given two $G$-systems, determine whether they are Hurwitz equivalent or not.
\item[Hurwitz search problem] Given two Hurwitz equivalent $G$-systems $\bg$ and $\bg'$, find a braid $\beta$ such that $\bg \cdot \beta = \bg'$.
\end{description}

These problems are very hard compared to the word and conjugacy problems. Liberman-Teicher showed that these problems are undecidable even for the braid groups \cite{lt}, which have various good properties and have nice solutions for the word and conjugacy problems. 

Although the Hurwitz equivalence/search problems are purely algebraic problems, they are closely related to geometry and topology.
By considering certain monodromy representations \cite{br}, many geometric objects in 4-dimensional topology and geometry such as braided surfaces \cite{ka}, Lefschetz fibrations \cite{ma}, and complex surfaces or complex curves \cite{kt},\cite{mt} are represented by a $G$-system for an appropriate group $G$. Such a $G$-system representative is not unique. Two $G$-system represent the same geometric object if and only if they are Hurwitz (or, HC-) equivalent. Thus, the Hurwitz equivalence/search problems are directly related to the classification problems these topological or geometric objects.

The aim of this paper is to propose an algebraic approach to the Hurwitz equivalence/search problem using theory of {\em word-reversing}.
We provide a criterion for two generator $G$-systems to be Hurwitz equivalent in Theorem \ref{thm:main}. Using this criterion, we give algorithmic approaches (Algorithm \ref{alg:htestsimple}, Algorithm \ref{alg:htest}) to solve the Hurwitz equivalence/search problems.

Unfortunately, our algorithms can be applied in very special cases and even worse, they are not deterministic and do not necessarily terminate in finite time. Nevertheless, our algorithm has several benefits.
First, in successful cases, our algorithm solves not only Hurwitz equivalence problems but also Hurwitz search problems. Moreover, one can also try to get stronger result, the classification of the Hurwitz equivalence classes of generator $G$-systems.
Second, in practice, one can apply our algorithms to try to show {\em arbitrary} $G$-systems are indeed Hurwitz equivalent, as we will discuss in Section 4.3.
Moreover, our algorithms can be implemented on a computer easily.
Finally, as for an application of geometry and topology, in many cases to show given $G$-systems are {\em not} Hurwitz equivalent is done by means of invariants of corresponding geometric objects. Thus, our algorithmic approach will provide a complementary method to studying such geometric objects.

The plan of the paper is as follows.
In Section 2, we review the theory of {\em word reversing} and {\em complete group presentations}.  We explain how the word reversing method solves the Hurwitz search problem in Section 3. We also present several applications, including results of HC-equivalences.
Based on the criterion in Section 3, we present algorithms to solve the Hurwitz equivalence/search problems in Section 4.
In Appendix we give an algorithm to try to show the embeddability of associated monoid, which allows us to try to classify generator $G$-systems.\\

\textbf{Acknowledgments.} 
The author is grateful to his advisor Toshitake Kohno for his encouragement and his comments. 
This research was supported by JSPS Research Fellowships for Young Scientists.

\section{Word reversing and complete presentation}

In this section we summarize the theory of word reversing and complete presentation. For details, see \cite{d2},\cite{d3}. Except Appendix, in this paper we only use right word reversing and right complete presentations, so we always drop the word ``right". 

Let $\mS=\{a_{1},\ldots,a_{m}\}$ be a finite set and $\mS^{*}$ be the free monoid generated by $\mS$. For a word $V \in \mS^{*}$ we denote the length of $V$ with respect to the generating set $\mS$ by $l(V)$.
A {\em positive relation} is a pair of elements in $\mS^{*}$, denoted by $W \equiv V$. 
A positive relation $W \equiv V$ is {\em homogeneous} if $l(V)=l(W)$. A positive relation of the form $aV \equiv aW$ or $Va \equiv Wa$ is called a {\it reducible relation}. As a group presentation, a reducible relation can be replaced by the simpler relation $V \equiv W$.

A {\it positive group presentation} is a group presentation of the form $\mP=\langle \mS \:|\: \mR \rangle$, where $\mR$ is a set of positive relations. Each positive relation $V \equiv W$ is understood as a group relation $V^{-1}W$. If both $\mS$ and $\mR$ are finite set, we say $\mP$ is a {\em finite positive presentation}.  We say $\mP$ is {\it homogeneous} if all relations are homogeneous. The {\it associated monoid} $M^{+}_{\mP}$ is a monoid $\mS^{*} \slash \equiv$, where $\equiv$ is the smallest congruence on $\mS^{*}$ that includes $\mR$.  

Now we introduce a word reversing, which is a fundamental tool to study positive presentation.

\begin{defn}[Word reversing]
Let $W$ and $W'$ be a word on $\mS \cup \mS^{-1}$. We say the word $W'$ is obtained from $W$ by performing one {\it word reversing} if one of the following holds.
\begin{enumerate}

\item $W'$ is obtained from $W$ by replacing a subword of the form $u^{-1}v$ with a subword $u' v'^{-1}$, where $u,v$ are nonempty words on $\mS$ and $u',v'$ are word on $\mS$ possibly an empty word, such that the positive relation $uu' \equiv vv'$ is contained in $\mR$.
\item $W'$ is obtained from $W$ by deleting a subword of the form $u^{-1}u$ where $u$ is a nonempty word on $\mS$. 
\end{enumerate}
\end{defn}

Diagrammatically, the word reversing is expressed as in Figure \ref{fig:reversing}.

\begin{figure}[htbp]
 \begin{center}
\includegraphics[width=70mm]{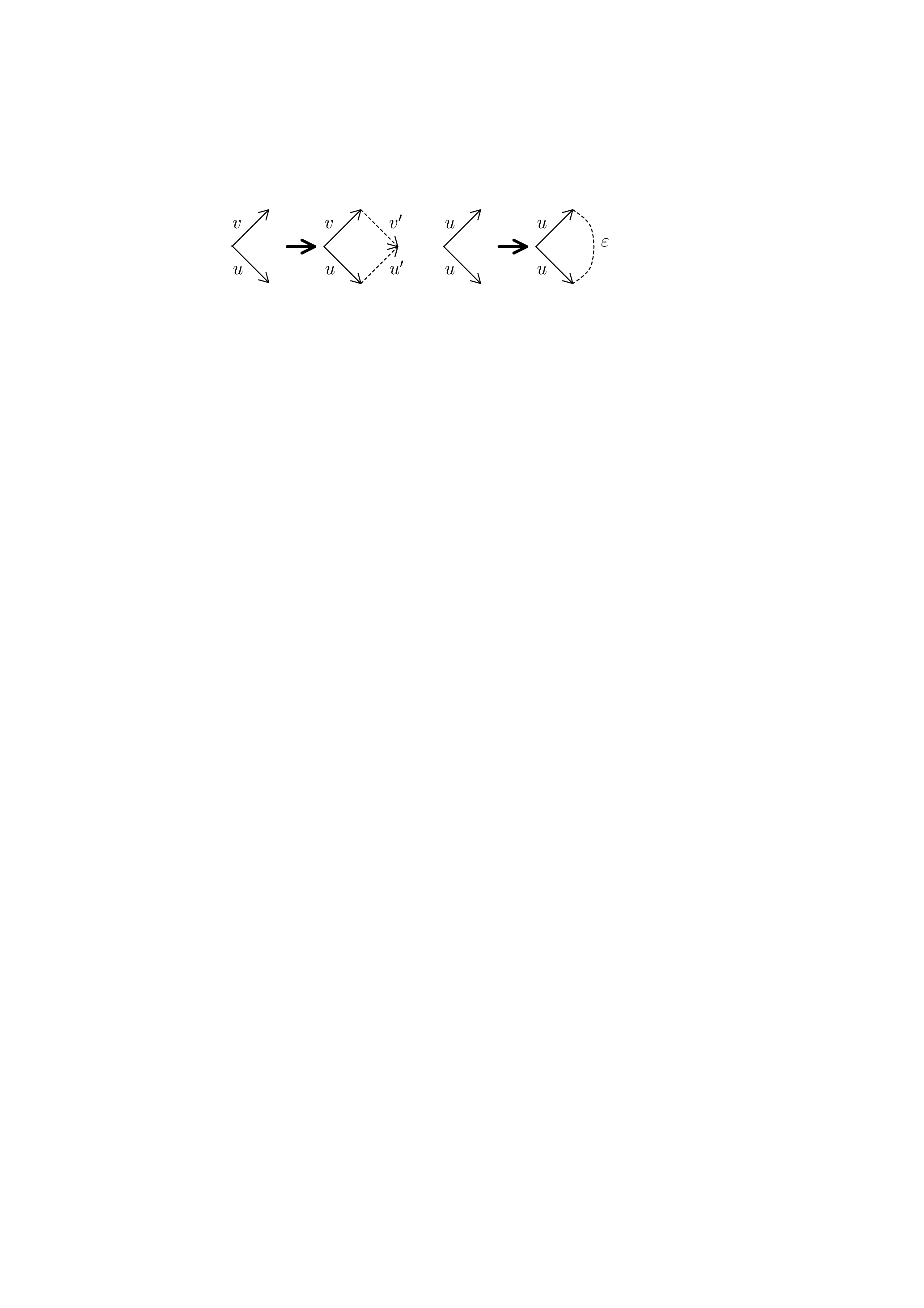}
 \end{center}
 \caption{Diagrammatic description of word reversing }
 \label{fig:reversing}
\end{figure}

We say a word $W$ on $\mS \cup \mS^{-1}$ is {\em reversible} to a word $W'$ on $\mS \cup \mS^{-1}$ if $W'$ is obtained from $W$ by iterated applications of word reversing operations. We denote by $W \curvearrowright W'$ if $W$ is reversible to $W'$.

For $u,v \in \mS^{*}$, $u^{-1}v \curvearrowright \varepsilon$ implies $u \equiv v$ \cite[Proposition 1.9]{d2}. Thus word reversing is used to show given two words are congruent. In fact if $u^{-1}v \curvearrowright \varepsilon$ then the word reversing not only shows $u$ and $v$ are congruent but also provides a Van-Kampen diagram of $(u,v)$, which contains more information about the congruence relation. 

Let $W,W' \in \mS^{*}$ be words representing the same element of $M_{\mP}^{+}$.
A {\em Van-Kampen diagram} of $(W,W')$ is an oriented sub-graph $D$ of the Cayley graph of $M_{\mP}^{+}$ which has the following properties.
\begin{enumerate}
\item $D$ has the unique source vertex which corresponds to an element $1$, and the unique sink vertex which corresponds to an element $W=W'$.
\item $D$ is a planer graph, and bounded by two edge paths defined by the word $W$ and $W'$. (In particular, $D$ defines a cellular decomposition $\mathcal{T}_{D}$ of a 2-disc).
\item The labeling of the boundary of each 2-cell in $\mathcal{T}_{D}$ is a relation in $\mR$. That is, the labeling is of the form $u^{-1}v$ and the relation $u \equiv v$ lies in $\mR$.
\end{enumerate}

See Figure \ref{fig:van-kampen} for example. Once a Van-Kampen diagram of $(W,W')$ is constructed, one can find a way to change the word $W$ into $W'$ by using the relations in $\mR$. That is, one can find a sequence of words on $\mS$
\[ W=W_{0} \rightarrow W_{1} \rightarrow \cdots \rightarrow W_{k-1} \rightarrow W_{k}=W' \]
where each $W_{i+1}$ is obtained from $W_{i}$ by performing a relation in $\mR$.

Recall the diagrammatic expression of word-reversing described in Figure \ref{fig:reversing}. Then the word reversing is considered as an operation to glue a 2-cell along paths $u^{-1}v$, or to identify two 1-cells having the same label. Thus, by expressing word reversing in a diagrammatic way, if $u^{-1}v \curvearrowright \varepsilon$ then we can draw a Van-Kampen diagram for $(u,v)$. 

\begin{exam}
\label{exam:van-kampen}
Let us consider a positive presentation of the braid group $B_{3}$
\[ \mP_{1}= \langle \mS \: | \: \mR \rangle = \langle x,y,z \: | \: xyx \equiv yxy , xy\equiv yz \equiv zx \rangle. \]
Here the relation $xy\equiv yz \equiv zx $ is understood as the three relations $xy\equiv yz$, $yz \equiv zx $ and $xy \equiv zx $. Let us reverse the word $(xxyx)^{-1}zxyz$.
\begin{eqnarray*}
x^{-1}y^{-1}x^{-1}\underline{x^{-1}z}xyx & \curvearrowright^{(1)} &   x^{-1}y^{-1}x^{-1}yx^{-1}xyz \\ 
x^{-1}y^{-1}x^{-1}y\underline{x^{-1}x}yz & \curvearrowright^{(2)} &
 x^{-1}y^{-1}x^{-1}yyz \\
x^{-1}\underline{y^{-1}x^{-1}y}yz & \curvearrowright^{(3)} &
 x^{-1}xy^{-1}x^{-1}yz \\
\underline{x^{-1}x}y^{-1}x^{-1}yz  & \curvearrowright^{(4)}& y^{-1}x^{-1}yz \\
\underline{y^{-1}x^{-1}yz} & \curvearrowright^{(5)}& \varepsilon. 
 \end{eqnarray*} 
 
According to this word reversing sequence, we attach a 2-cells or identify 1-cells, and obtain a Van-Kampen diagram of $(xxyx,zxyz)$ as shown in Figure \ref{fig:van-kampen}.
From this Van-Kampen diagram, we obtain a sequence of words
\[ xxyx \rightarrow xyxy \rightarrow zxxy \rightarrow zxyz\]
which converts the word $xxyx$ to $zxyz$ by using the relations in $\mR$.

\begin{figure}[htbp]
 \begin{center}
\includegraphics[width=90mm]{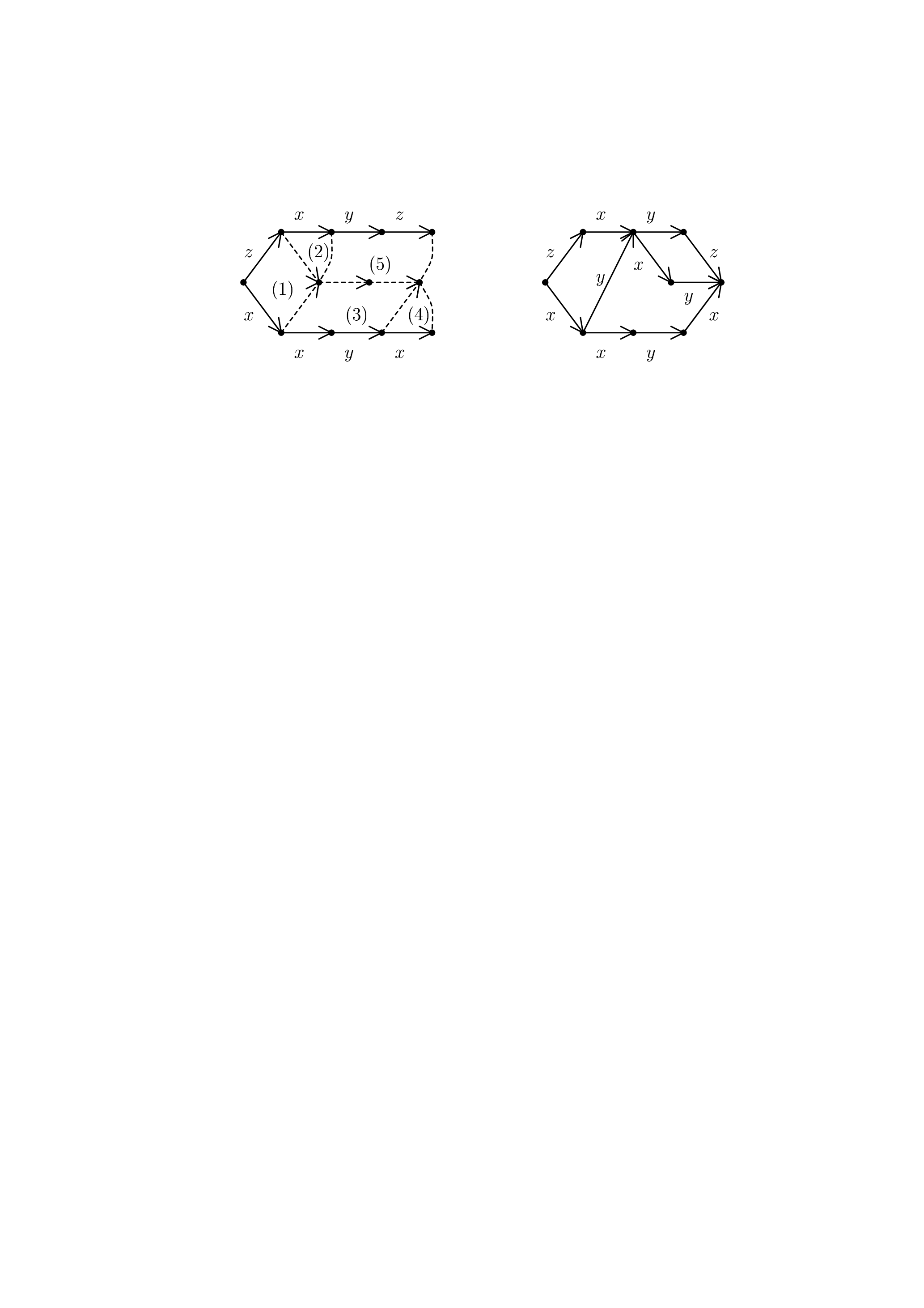}
 \end{center}
 \caption{Construction of Van-Kampen Diagram of $(xxyx,zxyz)$}
 \label{fig:van-kampen}
\end{figure}

\end{exam}

In general word reversing is not sufficient to show two words are congruent. That is, $u \equiv v$ does not imply $u^{-1}v \curvearrowright \varepsilon$. A {\em complete presentation} is a positive presentation such that the converse is true.

\begin{defn}[Complete positive group presentation]
A positive group presentation $\langle \mS \:| \:\mR \rangle $ is {\em complete} if 
$u^{-1}v \curvearrowright \varepsilon$ is equivalent to $u \equiv v$ for all 
$u,v \in \mS^{*}$.
\end{defn}

Thus, a complete presentation is a positive presentation such that word-reversing can detect congruence relations. There is a nice characterization of a complete presentation for a finite positive homogeneous presentation. This allows us to check whether a given homogeneous finite presentation is complete or not. 

\begin{thm}[\cite{d2}, Proposition 4.4]
A finite positive homogeneous presentation $\langle \mS \: | \:\mR \rangle $ is complete if and only if the condition $SC(\mS)$ (called the strong cube condition on $\mS$) holds.
\begin{itemize}
\item[$SC(\mS)$:] For $s,r,t \in \mS$ and $u,v \in \mS^{*}$, if $s^{-1}rr^{-1}t \curvearrowright  uv^{-1}$ then $(su)^{-1}(tv) \curvearrowright  \varepsilon. $
\end{itemize}
\end{thm}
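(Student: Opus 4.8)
The plan is to prove the two implications separately. The forward one, completeness $\Rightarrow SC(\mS)$, is nearly immediate once a reversing sequence is read as a diagram; the converse carries the combinatorial weight, the essential point being that word reversing is \emph{not} confluent in general, so an extra hypothesis such as $SC(\mS)$ is genuinely needed. For completeness $\Rightarrow SC(\mS)$: assume $\mP$ is complete and $s^{-1}rr^{-1}t \curvearrowright uv^{-1}$ with $s,r,t \in \mS$ and $u,v \in \mS^{*}$. As explained in Section 2, any terminating reversing sequence from $s^{-1}rr^{-1}t$ to $uv^{-1}$ assembles, by attaching a $2$-cell for each relation used and identifying edges with equal labels, into a planar diagram whose two boundary paths read $su$ and $tv$; it is a Van-Kampen diagram over $M^{+}_{\mP}$, so $su \equiv tv$. (Concretely, if $s^{-1}r \curvearrowright S_{1}R_{1}^{-1}$, then $r^{-1}t \curvearrowright R_{2}T_{1}^{-1}$, and then $R_{1}^{-1}R_{2} \curvearrowright R_{1}'{R_{2}'}^{-1}$, one may take $u = S_{1}R_{1}'$ and $v = T_{1}R_{2}'$, and then $su \equiv rR_{1}R_{1}' \equiv rR_{2}R_{2}' \equiv tT_{1}R_{2}' = tv$ by the positive relations witnessing these three reversings.) Since $\mP$ is complete, $su \equiv tv$ forces $(su)^{-1}(tv) \curvearrowright \varepsilon$, which is exactly $SC(\mS)$.

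For $SC(\mS) \Rightarrow$ completeness the argument runs in two stages. \emph{Stage 1}: promote $SC(\mS)$ to the cube condition on all of $\mS^{*}$, namely that for all $u,v,w \in \mS^{*}$ for which the relevant reversings terminate, the ``cube'' obtained by gluing the squares that complete the pairs $(u,v)$, $(v,w)$, $(u,w)$ closes up. One induces on $l(u)+l(v)+l(w)$: the case of three generators is exactly $SC(\mS)$, and otherwise one writes one of the three words as a generator followed by a strictly shorter word, which exhibits the cube on $(u,v,w)$ as a stack, along a shared square face, of two cubes on triples of smaller total length, to which the induction hypothesis applies. Finiteness and homogeneity of $\mP$ are used here to guarantee that word reversing from any $p^{-1}q$ with $p,q \in \mS^{*}$ terminates in a word of the form $PN^{-1}$, so that all the complements appearing in the induction are defined. \emph{Stage 2}: the cube condition on $\mS^{*}$ is precisely local confluence of word reversing regarded as a rewriting system on words over $\mS \cup \mS^{-1}$ with normal forms of the form $PN^{-1}$ ($P,N \in \mS^{*}$); homogeneity makes this system noetherian, so a Newman-type argument promotes local confluence to confluence, and confluence is equivalent to completeness. (Equivalently, by hand: given $u \equiv v$, take a chain $u = w_{0} \to \cdots \to w_{k} = v$ of single $\mR$-rewrites and prove $u^{-1}v \curvearrowright \varepsilon$ by induction on $k$, each step using the cube condition to push the reversing accumulated so far past the next turn of the chain.)

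I expect Stage 1 to be the main obstacle. Its bookkeeping is genuinely three-dimensional --- one is stacking Van-Kampen-like reversing diagrams along shared faces --- and one must simultaneously verify that every auxiliary reversing produced by the gluing terminates (this is exactly where finiteness and homogeneity are indispensable) and arrange the inductive measure so that it strictly decreases, treating the degenerate cases (some of $u,v,w$ empty, or equal to a single generator) directly, so that the induction is well founded and each glued object is again a legitimate reversing diagram. Stage 2 and the forward implication are then routine, given the diagrammatic dictionary of Section 2 and the noetherianity of word reversing; the original proof appears in \cite{d2}.
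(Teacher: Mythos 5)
This theorem is quoted in the paper from Dehornoy \cite{d2} without proof, so there is no in-paper argument to compare yours against; I can only assess your sketch against the standard argument. Your overall architecture is the right one: the forward implication via reading a reversing sequence as a Van-Kampen-type diagram to get $su \equiv tv$, and the converse via (i) promoting the cube condition from triples of letters to triples of words and (ii) deducing completeness from the cube condition on $\mS^{*}$ by induction on the length of a derivation of $u \equiv v$. Your ``by hand'' version of Stage 2 is in fact the clean way to finish: if $u \to \cdots \to w \to \cdots \to v$ and inductively $u^{-1}w \curvearrowright \varepsilon$ and $w^{-1}v \curvearrowright \varepsilon$, then $u^{-1}ww^{-1}v \curvearrowright \varepsilon\varepsilon^{-1}$, and the cube condition for the triple $(u,w,v)$ gives $u^{-1}v \curvearrowright \varepsilon$ directly; the base case uses only the deletion rule and the fact that each relation $p \equiv q$ of $\mR$ reverses $p^{-1}q$ to $\varepsilon$ in one step.

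The genuine gap is the load-bearing claim, made in Stage 1 and repeated as ``noetherianity'' in Stage 2, that finiteness and homogeneity guarantee that every reversing of $p^{-1}q$ terminates in a word of the form $PN^{-1}$. This is false. First, reversing can get stuck at a negative--positive pair $a^{-1}b$ for which $\mR$ contains no relation of the form $a\cdots \equiv b\cdots$; the paper's own presentation $\mP_{0}=\langle x,y,z \mid xyx\equiv yxy,\ xy\equiv yz\rangle$ exhibits this with $x^{-1}z$, which is irreducible but not of the form $PN^{-1}$. Second, for homogeneous relations of length $\geq 3$ a single reversing step can lengthen the word, so termination is not automatic either, and indeed is not a consequence of homogeneity. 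Consequently your Newman-type argument in Stage 2 has no noetherian rewriting system to apply to, and your Stage 1 induction cannot assume that ``all the complements appearing in the induction are defined.'' The correct setup keeps every statement conditional --- the cube condition only imposes an obligation when the hypothesized reversing does reach the form $uv^{-1}$ --- and the promotion from $\mS$ to $\mS^{*}$ rests on a decomposition lemma (any terminating reversing of $(p_{1}p_{2})^{-1}q$ factors as a reversing of $p_{1}^{-1}q$ followed by one of $p_{2}^{-1}(\cdot)$) that you do not state but implicitly need for the stacking argument. Homogeneity enters not through termination of reversing but through noetherianity of the associated monoid (no nontrivial invertible elements, well-founded divisibility), which is what makes the promotion induction well founded. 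As written, both stages of your converse lean on the false termination claim, so the argument does not go through without this repair.
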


Based on the strong cube condition, one can try to make a non-complete finite homogeneous positive presentation complete as follows. Assume that the strong cube condition fails for some $s,r,t,u,v$. That is, $s^{-1}rr^{-1}t \curvearrowright uv^{-1}$ but $(su)^{-1}(tv)\not\curvearrowright \varepsilon$. Then we add a new relation $su \equiv tv$ so that the strong cube condition is satisfied for such $s,r,t,u,v$. In general adding a new relation produces a new word reversing sequences, so the new presentation is not necessarily complete and we may iterate this operation. The precise algorithm is given as Algorithm \ref{alg:complete}. As we explained, this algorithm does not necessarily terminate.

\begin{algorithm}
\caption{:Presentation Completion Algorithm}
\label{alg:complete}
\begin{flushleft}
Input : A finite homogeneous positive presentation $\mP = \langle \mS\: | \:\mR \rangle$ of a group $G$.\\
Output: A complete presentation of $G$.\\
\end{flushleft}
\begin{enumerate}
\item Compute all pairs of words $u,v \in \mS^{*}$ such that $s^{-1}rr^{-1}t \curvearrowright uv^{-1}$ for some $s,r,t \in \mS$.
\item Check $(su)^{-1}(tv) \curvearrowright \varepsilon$ holds for all $u,v$ obtained by Step (1). If $(su)^{-1}(tv) \not\curvearrowright \varepsilon$, then replace the presentation $\mP$ with the new presentation 
\[ 
\langle \mS\: | \:\mR \cup \{ su \equiv tv\} \rangle
\]
and go back to Step (1).
\item Stop.
\end{enumerate}
\end{algorithm}

\begin{exam}
\label{exam:completion}

Let us consider the presentation of the braid group $B_{3}$ given by 
\[ \mP_{0}= \langle x,y,z \: | \: xyx \equiv yxy, xy \equiv yz \rangle.\]

We have $y^{-1}xx^{-1}y \curvearrowright xyx^{-1}z^{-1}$, but $(yxy)^{-1}yzx \not \curvearrowright \varepsilon$. Thus we add a new relation $yxy \equiv yzx$ to $\mP_{0}$ and 
 obtain the new presentation
\[ \mP'_{0}= \langle x,y,z \: | \: xyx \equiv yxy, xy \equiv yz, yxy \equiv yzx \rangle. \]
In $\mP'_{0}$, a new word reversing sequence $x^{-1}yy^{-1}x \curvearrowright (yxxy)(yxzx)^{-1}$ appears. Since there are no relations of the form $ z\cdots \equiv \cdots$, 
$(xyxxy)^{-1}(xyxzx) \not\curvearrowright  \varepsilon$. Thus, the presentation $\mP'_{0}$ is not complete. We need to add further relation $xyxzx \equiv xyxxy$, and so on. In this case, the completion procedure never terminate.

On the other hand, let us consider another presentation of $B_{3}$
\[ \mP_{1}= \langle x,y \: | \: xyx \equiv yxy , xy\equiv yz \equiv zx \rangle \]
used in Example \ref{exam:van-kampen}.
$\mP_{1}$ satisfies the strong cube conditions, so it is complete.
\end{exam}

\section{Hurwitz equivalence criterion via Coxeter elements and Coxeter words}

In this section we provide a criterion for Hurwitz equivalence and HC-equivalences.
To state our results, we introduce the notion of Hurwitz-compatible relations.

\begin{defn}
Let $\mP=\langle \mS \: | \: \mR \rangle$ be a finite homogeneous positive presentation of a group $G$ and $R:V \equiv W$ be a positive relation in $\mR$. For words $V = a_{1}a_{2}\cdots a_{l}$, $W = a'_{1}\cdots a'_{l}$ on $\mS$, let $\bg_{V}$, $\bg_{W}$ be generator $G$-systems defined by
\[ \bg_{V}=(a_{1},\ldots, a_{l}), \;\bg_{W}=(a'_{1},\ldots, a'_{l}). \]
We say a homogeneous positive relation $R$ is {\it Hurwitz-compatible} if there exists an $l$-braid $\beta_{R}$ such that $\bg_{V} \cdot \beta_{R} = \bg_{W}$. 
\end{defn}

By definition, Hurwitz compatible relations are homogeneous.
A typical example of a Hurwitz-compatible relation is a {\em word-conjugacy relation}, which is a positive relation of the form $R:aV \equiv Va'$, where $a,a' \in \mS$ and $V \in \mS^{*}$. In fact, $\bg_{aV} \cdot (\sigma_{1}\sigma_{2} \cdots \sigma_{l(V)}) = \bg_{Va'}$, thus we may choose $\beta_{R}=\sigma_{1}\sigma_{2}\cdots \sigma_{l(V)}$.

First of all, observe that there is an obvious and fundamental invariant of Hurwitz equivalence classes.
The {\em Coxeter element} (or, {\em the global monodromy}) of a $G$-system $\bg=(g_{1},\ldots,g_{m})$ is an element $C(\bg)=g_{1}g_{2}\cdots g_{m} \in G$. 
It is easy to see if $\bg \sim_{H} \bg'$ then $C(\bg) = C(\bg')$, and if $\bg \sim_{HC} \bg'$ then $C(\bg)$ and $C(\bg')$ are conjugate. 
The Coxeter element serves as a fundamental invariant to study Hurwitz equivalence class. For example, in \cite{i} the author classified $B_{3}$-systems having finite Hurwitz orbits by studying the centralizer of the Coxeter element.

For a generator $G$-system $\bg$, we can consider the refinement of the Coxeter element.
We call the word $g_{1}g_{2}\cdots g_{m} \in \mS^{*}$ the {\em Coxeter word} of $\bg$ and denote by $W(\bg)$. The Coxeter words contain more information than the Coxeter element itself.
 
\begin{lem}
\label{lem:key}
Let $G = \langle \mS \: | \: \mR \rangle$ be a positively presented group and $\ba = (a_{1},\ldots,a_{m})$, $\ba' = (a'_{1},\ldots,a'_{m})$ be generator $G$-systems of the same length. Let $W = a_{1}a_{2}\cdots a_{m}$ and $W=a_{1}'a_{2}'\cdots a_{m}'$ be the Coxeter words of $\ba$ and $\ba'$.
\begin{enumerate}
\item If $W'$ is obtained from $W$ by applying a Hurwitz-compatible relation $R:U \equiv V$ in $\mR$, then $\ba$ and $\ba'$ are Hurwitz equivalent.
\item If $W'$ is obtained from $W$ by applying a cycling operation $a_{1}a_{2}\cdots a_{m} \rightarrow a_{m}a_{1}a_{2}\cdots a_{m-1}$, then $\ba$ and $\ba'$ are HC equivalent.
\end{enumerate}
\end{lem}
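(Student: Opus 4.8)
The plan is to exploit the \emph{locality} of the Hurwitz action: the generator $\sigma_i$ of $B_m$ alters only the $i$-th and $(i+1)$-st entries of a $G$-system. Concretely, for integers $k\ge 0$ and $l\ge 1$ with $k+l\le m$, the assignment $\sigma_i\mapsto\sigma_{k+i}$ extends to a well-defined homomorphism $\iota_{k,l}\colon B_l\to B_m$ (one just checks the braid relations), and from the defining formula one sees at once that acting by $\iota_{k,l}(\gamma)$ on $(g_1,\ldots,g_m)$ fixes all entries outside positions $k+1,\ldots,k+l$ and transforms the subtuple $(g_{k+1},\ldots,g_{k+l})$ exactly via the Hurwitz action of $\gamma\in B_l$ on $G^l$. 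This observation does the bulk of the work.

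For (1), ``applying the relation $R$'' means writing $W=PUQ$ and $W'=PVQ$ for some $P,Q\in\mS^{*}$; set $k=l(P)$ and $l=l(U)=l(V)$, the last equality by homogeneity of $R$. Then the subtuple of $\ba$ in positions $k+1,\ldots,k+l$ is precisely $\bg_{U}$, that of $\ba'$ in the same positions is $\bg_{V}$, and elsewhere $\ba$ and $\ba'$ coincide (they read off the letters of $P$ and $Q$). Hurwitz-compatibility supplies an $l$-braid $\beta_{R}$ with $\bg_{U}\cdot\beta_{R}=\bg_{V}$; by the locality observation, $\ba\cdot\iota_{k,l}(\beta_{R})=\ba'$, so $\ba\sim_{H}\ba'$. (If the relation is used in the opposite direction, replacing an occurrence of $V$ by $U$, one uses $\iota_{k,l}(\beta_{R}^{-1})$ instead.)

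For (2), I would just exhibit the required element of $G\times B_{m}$. Put $\beta=\sigma_{m-1}\sigma_{m-2}\cdots\sigma_{1}\in B_{m}$. A short induction on the number of factors (apply $\sigma_{m-1}$, then $\sigma_{m-2}$, and so on, each step sliding $a_{m}$ one position to the left and conjugating the displaced entry by $a_{m}$) gives
\[ \ba\cdot\beta=(a_{m},\,a_{1}^{a_{m}},\,a_{2}^{a_{m}},\,\ldots,\,a_{m-1}^{a_{m}}). \]
Since the $G$- and $B_{m}$-actions commute, acting further by $a_{m}^{-1}\in G$ conjugates every entry $x$ to $a_{m}\,x\,a_{m}^{-1}$: the first entry becomes $a_{m}a_{m}a_{m}^{-1}=a_{m}$, and the $(i+1)$-st becomes $a_{m}(a_{m}^{-1}a_{i}a_{m})a_{m}^{-1}=a_{i}$. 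Hence $\ba\cdot(a_{m}^{-1},\beta)=(a_{m},a_{1},\ldots,a_{m-1})=\ba'$, so $\ba\sim_{HC}\ba'$.

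Neither part presents a serious obstacle; the only care needed is the index bookkeeping in $\iota_{k,l}$ for (1) and the verification of the displayed formula for $\ba\cdot\beta$ in (2). It is, however, worth flagging the conceptual point behind (2): dragging $a_{m}$ past the remaining strands unavoidably conjugates every other entry by $a_{m}$, and this global conjugation can be undone only by the $G$-action, which is exactly why a cyclic shift of the Coxeter word produces HC-equivalence rather than Hurwitz equivalence.
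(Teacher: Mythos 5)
Your proof is correct and follows essentially the same route as the paper: for (1) you conjugate the compatibility braid $\beta_R$ by the shifted embedding $\sigma_i\mapsto\sigma_{k+i}$ (the paper's $\iota\circ sh$), and for (2) you compute $\ba\cdot(\sigma_{m-1}\cdots\sigma_1)=(a_m,a_1^{a_m},\ldots,a_{m-1}^{a_m})$ and absorb the residual conjugation into the $G$-action. The only differences are cosmetic: you make the conjugating element $a_m^{-1}$ explicit where the paper merely asserts HC-equivalence, and you note the case where the relation is applied right-to-left.
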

\begin{proof}
Let us write $W = XUY$, $W' = XVY$ and $sh: B_{l} \rightarrow B_{l+k}$ be the $k$-fold shift map defined by $\sigma_{i} \rightarrow \sigma_{i+k}$ where $k=l(X)$ and $l=l(U)=l(V)$. Let $\iota: B_{l+k} \hookrightarrow B_{m}$ be the natural embedding of $B_{l+k}$.
Assume the relation $R:U \equiv V$ is Hurwitz-compatible, and let $\beta_{R}$ be an $l$- braid such that $\bg_{U} \cdot \beta_{R} = \bg_{V}$.
Then, $\ba \cdot \iota \circ sh(\beta_{R}) = \ba'$, thus $\ba$ and $\ba'$ are Hurwitz equivalent.
Similarly, assume that $W'$ is obtained from the cycling operation. Since $\ba \cdot (\sigma_{m-1}\sigma_{m-2}\cdots \sigma_{1}) = ( a_{m}, a_{1}^{a_{m}}, a_{2}^{a_{m}}, \ldots, a_{m-1}^{a_{m}})$, $\ba$ and $\ba'$ are HC-equivalent. 
\end{proof}

Theorem \ref{thm:main} below shows the relationships between word reversing and Hurwitz equivalences, and reveals that under some conditions the Coxeter element completely determines the Hurwitz equivalence class.

\begin{thm}
\label{thm:main}
Let $\mP = \langle \mS \: | \: \mR \rangle$ be a finite homogeneous positive presentation of a group $G$ such that $\mR$ consists of  Hurwitz-compatible relations. Let $\ba,\ba'$ be generator $G$-systems of the same length. 
\begin{enumerate}
\item If $W(\ba) \equiv W(\ba')$, then $\ba \sim_{H}\ba'$. 
\item If $W(\ba)^{-1}W(\ba') \curvearrowright \varepsilon$, then $\ba \sim_{H} \ba$. Moreover, in this case we can solve the Hurwitz search problem for $\ba$ and $\ba'$.
\item If $M_{\mP}^{+}$ injects in $G$, then $\ba \sim_{H} \ba'$ if and only if $C(\ba) = C(\ba')$ holds.
\item If $M_{\mP}^{+}$ injects in $G$ and the presentation $\mP$ is complete, then $\ba \sim_{H} \ba'$ if and only if $W(\ba)^{-1}W(\ba') \curvearrowright \varepsilon$. 
In such case, we can solve not only Hurwitz equivalence problem but also Hurwitz search problem.
\end{enumerate}
\end{thm}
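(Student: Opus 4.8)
The plan is to prove the four assertions in order, each relying on the previous ones together with Lemma \ref{lem:key} and the basic facts about word reversing recalled in Section 2; in particular \cite[Proposition 1.9]{d2} and the Van-Kampen diagram construction. For (1), unwind the hypothesis $W(\ba)\equiv W(\ba')$: by definition of $\equiv$ as the congruence on $\mS^{*}$ generated by $\mR$, there is a finite chain $W(\ba)=W_{0}\to W_{1}\to\cdots\to W_{k}=W(\ba')$ of positive words in which each $W_{i+1}$ arises from $W_{i}$ by replacing one occurrence of a side of some relation of $\mR$ by the other side. Each $W_{i}$ is the Coxeter word of a uniquely determined generator $G$-system $\ba_{i}$ obtained by reading off its letters, with $\ba_{0}=\ba$ and $\ba_{k}=\ba'$. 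Since every relation of $\mR$ is Hurwitz-compatible and Hurwitz-compatibility is symmetric in the two sides of a relation (if $\beta_{R}$ realizes $\bg_{V}\cdot\beta_{R}=\bg_{W}$ then $\beta_{R}^{-1}$ realizes $\bg_{W}\cdot\beta_{R}^{-1}=\bg_{V}$), Lemma \ref{lem:key}(1) applies at each step and gives $\ba_{i}\sim_{H}\ba_{i+1}$; composing yields $\ba\sim_{H}\ba'$.

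For (2), the hypothesis $W(\ba)^{-1}W(\ba')\curvearrowright\varepsilon$ gives $W(\ba)\equiv W(\ba')$ by \cite[Proposition 1.9]{d2}, so (1) already yields $\ba\sim_{H}\ba'$. To solve the Hurwitz search problem I would exploit that reversing is effective: a finite reversing sequence from $W(\ba)^{-1}W(\ba')$ to $\varepsilon$ produces a Van-Kampen diagram of $(W(\ba),W(\ba'))$, from which one reads off an explicit rewrite chain $W(\ba)=W_{0}\to\cdots\to W_{k}=W(\ba')$ by relations of $\mR$, exactly as in Example \ref{exam:van-kampen}. For the $i$-th rewrite, which applies some relation $R_{i}:U_{i}\equiv V_{i}$ inside $W_{i}=X_{i}U_{i}Y_{i}$, the proof of Lemma \ref{lem:key}(1) provides the explicit braid $\iota\circ sh(\beta_{R_{i}})\in B_{m}$ (the shift by $l(X_{i})$ followed by the embedding), and the product of these braids in order is a braid $\beta$ with $\ba\cdot\beta=\ba'$.

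Assertions (3) and (4) are then formal. For (3), the forward implication is the observation recorded before the theorem (the Hurwitz action preserves the Coxeter element). For the converse, note that the canonical map $\mS^{*}\to G$ sends $W(\bg)$ to $C(\bg)$ and factors as $\mS^{*}\twoheadrightarrow M_{\mP}^{+}\to G$; if $M_{\mP}^{+}$ injects into $G$, then $C(\ba)=C(\ba')$ forces $W(\ba)$ and $W(\ba')$ to have the same image in $M_{\mP}^{+}$, that is $W(\ba)\equiv W(\ba')$, and (1) concludes. For (4), the ``if'' direction is (2); for ``only if'', $\ba\sim_{H}\ba'$ gives $C(\ba)=C(\ba')$, hence $W(\ba)\equiv W(\ba')$ as in (3), and completeness of $\mP$ converts this into $W(\ba)^{-1}W(\ba')\curvearrowright\varepsilon$ by the definition of a complete presentation; the search problem is then handled by (2).

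I do not expect a deep obstacle: the statement is essentially an assembly of Lemma \ref{lem:key} with Dehornoy's word-reversing machinery. The point requiring the most care is the effectivity claim in (2): one must verify that the reversing sequence genuinely yields the rewrite chain $W_{0}\to\cdots\to W_{k}$ and that the various shifts and embeddings $\iota\circ sh$ of the braids $\beta_{R_{i}}$ really compose into a single braid in $B_{m}$. The homogeneity hypothesis is what makes this clean, since it guarantees that every intermediate word has length $m$, so the construction stays inside $G^{m}$ and $B_{m}$ throughout; the remaining verification is bookkeeping rather than a genuine difficulty.
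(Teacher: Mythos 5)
Your proposal is correct and follows essentially the same route as the paper: unwind the congruence (or the reversing sequence, via a Van-Kampen diagram) into a chain of single relation applications, apply Lemma \ref{lem:key} at each step to get explicit braids, and reduce (3) and (4) to (1) and (2) via the injectivity of $M_{\mP}^{+}\to G$ and the definition of completeness. The one point you make explicit that the paper leaves implicit --- that Hurwitz-compatibility is symmetric because $\beta_{R}^{-1}$ realizes the reverse rewrite --- is a worthwhile clarification, since the congruence chain may apply relations in either direction.
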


\begin{proof}[Proof of Theorem \ref{thm:main}]
(1) directly follows from Lemma \ref{lem:key}. To prove (2), recall if $u^{-1}v \curvearrowright \varepsilon $, then we can obtain a Van-Kampen diagram for $(u,v)$ and   find a sequence of words on $\mS$
\[ W=W_{0} \rightarrow W_{1} \rightarrow \cdots \rightarrow W_{k-1} \rightarrow W_{k}=W' \]
where each $W_{i+1}$ is obtained from $W_{i}$ by performing the relations in $\mR$.
Let $\ba_{i}$ be the generator $G$-system of length $m$ whose Coxeter word is $W_{i}$.
Then by Lemma \ref{lem:key}, we can find a braid $\beta_{i}$ such that $\ba_{i} \cdot \beta_{i} = \ba_{i+1}$. Thus, $\ba \cdot (\beta_{0}\beta_{1}\cdots \beta_{k-1}) = \ba'$ so we solved the Hurwitz search problem.
To prove (3), observe that if the associated monoid $M_{\mP}^{+}$ embeds in $G$, then $C(\ba) = C(\ba')$ is equivalent to $W(\ba) \equiv W(\ba')$.
Finally, (4) follows from (2), (3) and the definition of the complete presentation.
\end{proof}

As we have given as Algorithm \ref{alg:complete}, for a finite homogeneous positive presentation one can try to check make the presentation complete. Moreover, using  Algorithm \ref{alg:embed} in Appendix,  one can also try to show $M_{\mP}^{+}$ injects in $G$ using the theory of word reversing. Thus, one can algorithmically try to show whether two generator $G$-systems are Hurwitz equivalent or not by using Theorem \ref{thm:main}. This point of view will be pursued in next section.

Theorem \ref{thm:main} is applied for some well-known families of groups.
The first example is an Artin group.
Let $M=(m_{ij})_{1 \leq i,j \leq m}$ be a Coxeter matrix, which is a symmetric matrix such that $m_{ii} = 1$ and $m_{ij} \in \{2,3,\ldots,\infty \}$ for distinct $i$ and $j$.
The {\it Artin group} $G$ corresponding to $M$ is a group defined by the positive presentation
\[ G = \langle a_{1},\ldots, a_{m} \: | \: R_{ij} \;\; (m_{i,j} \neq \infty) \rangle\]
 where $R_{ij}$ is a positive irreducible word conjugacy relation
 \[ R_{ij} : \underbrace{a_{i}a_{j}a_{i}\cdots }_{m_{ij}} \equiv \underbrace{a_{j}a_{i}a_{j}\cdots }_{m_{ij}}. \]
We call this presentation the {\it standard presentation}.
An Artin group is called {\it right-angled} if all $m_{ij}$ $(i\neq j)$ are either $2$ or $\infty$. A right-angled Artin group is represented by its {\it associated graph}, whose vertices are generators and two vertices $v_{i}$ and $v_{j}$ are connected by a single edge if and only if their corresponding generators $a_{i}$ and $a_{j}$ do not commute, in other words, if and only if $m_{ij}=m_{ji}= \infty$. 
 
\begin{cor}
\label{cor:artin}
Let $G$ be an Artin group with the standard presentation $\mP$ and $\ba$, $\ba'$ be generator $G$-systems having the same length. Then, 
\begin{enumerate}
\item $\ba \sim_{H} \ba'$ if and only if $C(\ba) = C(\ba')$.
\item If $G$ is a right-angled Artin group, then $\ba \sim_{HC}\ba'$ if and only if $C(\ba)$ and $C(\ba')$ are conjugate.
\end{enumerate}
\end{cor}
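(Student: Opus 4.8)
The plan is to deduce (1) directly from Theorem~\ref{thm:main}(3) and (2) from Theorem~\ref{thm:main}(1) together with the conjugacy theory of right-angled Artin groups. For (1), I would first check that the standard presentation $\mP$ meets the hypotheses of Theorem~\ref{thm:main}: it is finite and homogeneous, and each defining relation $R_{ij}$ is — as already recorded in the definition of the standard presentation — a positive word-conjugacy relation $aV\equiv Va'$ (with $a=a'=a_i$ when $m_{ij}$ is even and $a=a_i$, $a'=a_j$ when $m_{ij}$ is odd, $V$ being the appropriate alternating word), hence Hurwitz-compatible. The only remaining input is that the associated monoid $M^{+}_{\mP}$, which is exactly the Artin monoid, injects into $G$; this is the theorem of Paris (and, in the spherical case, Brieskorn--Saito and Deligne). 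Granting it, Theorem~\ref{thm:main}(3) gives $\ba\sim_{H}\ba'$ iff $C(\ba)=C(\ba')$, which is (1).

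For (2) the ``only if'' direction is the elementary invariance of the conjugacy class of the Coxeter element noted just before the corollary. For the converse, suppose $G=A(\Gamma)$ is right-angled and $C(\ba)$, $C(\ba')$ are conjugate in $G$. Here $M^{+}_{\mP}$ is the trace (graph) monoid $M(\Gamma)$, which is classically known to embed in $A(\Gamma)$, so $C(\ba)$ and $C(\ba')$ are the images of the traces $W(\ba)$ and $W(\ba')$. The key tool I would invoke is the conjugacy criterion for trace monoids: two elements of $M(\Gamma)$ have conjugate images in $A(\Gamma)$ precisely when one is a ``transposition'' of the other, i.e. there exist words $U,V$ on $\mS$ with $W(\ba)\equiv UV$ and $W(\ba')\equiv VU$ in $M(\Gamma)$. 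By Theorem~\ref{thm:main}(1), $\ba$ is then Hurwitz equivalent to the generator $G$-system $\ba_{1}$ with Coxeter word $UV$ and $\ba'$ to the generator $G$-system $\ba_{2}$ with Coxeter word $VU$; and $\ba_{1}\sim_{HC}\ba_{2}$ by applying the cycling operation $l(V)$ times, since cycling $UV$ exactly $l(V)$ times (moving the last letter to the front each time) produces $VU$ (Lemma~\ref{lem:key}(2)). Chaining, $\ba\sim_{H}\ba_{1}\sim_{HC}\ba_{2}\sim_{H}\ba'$, hence $\ba\sim_{HC}\ba'$.

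The main obstacle is not an internal computation but the importation of the two external ingredients: the embedding of the Artin monoid into the Artin group is a deep result that must simply be quoted, and the trace-monoid conjugacy criterion is where the right-angled hypothesis is genuinely used — for a general Artin group there is no comparably clean description of conjugacy among ``positive'' elements, so part (2) has no obvious extension beyond the right-angled case. The remaining points, namely the even/odd case split in exhibiting $R_{ij}$ as a word-conjugacy relation and matching the number of cycling operations with $l(V)$, are routine bookkeeping.
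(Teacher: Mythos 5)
Your proposal is correct and follows essentially the same route as the paper: part (1) is exactly Theorem~\ref{thm:main}(3) plus Paris's embedding of the Artin monoid, and part (2) rests on the same external fact (conjugate positive elements of a right-angled Artin group differ by commutations and cyclic permutation, which the paper cites from \cite{cgw} and you phrase as the trace-monoid transposition criterion) combined with Lemma~\ref{lem:key}. Your write-up merely makes explicit the chaining $\ba\sim_{H}\ba_{1}\sim_{HC}\ba_{2}\sim_{H}\ba'$ that the paper leaves implicit.
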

\begin{proof}
(1) follows from Theorem \ref{thm:main} (3) and the results of Paris \cite{p} that the associated monoid $M_{\mP}^{+}$ of the standard presentation $\mP$ of an Artin group $G$ injects in $G$. Similarly, the second statement follows from Lemma \ref{lem:key} and the fact that two conjugate elements of the same length in a right-angled Artin group are related by the cycling operations and the commutative relations, which are word conjugacy relations hence Hurwitz compatible \cite{cgw}.
\end{proof}

A generator $G$-system $\ba = (a_{1},\ldots,a_{m})$ is {\it full} if $\{a_{1},\ldots,a_{m}\} = \mS$ and $a_{i} \neq a_{j}$ for $i\neq j$.
The next corollary shows under some conditions, the HC-equivalence class of full generator $G$-systems are invariant under the permutation of its entries.

\begin{cor}
\label{cor:Rartin}
Let $G$ be a right-angled Artin group and $\Gamma$ be its associated graph. 
Then, all full generator $G$-systems are HC-equivalent if and only if $\Gamma$ is a forest.
\end{cor}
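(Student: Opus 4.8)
The plan is to use Corollary \ref{cor:artin}(2), which reduces the claim to a statement about Coxeter elements: all full generator $G$-systems are HC-equivalent if and only if for every two orderings $a_{i_1}\cdots a_{i_m}$ and $a_{j_1}\cdots a_{j_m}$ of the full generating set $\mS$, the products are conjugate in $G$. So the real content is the combinatorial–group-theoretic equivalence: in a right-angled Artin group $A(\Gamma)$ on the full vertex set, all products of all generators taken once each (in any order) are conjugate to one another \emph{precisely when} $\Gamma$ is a forest. I would prove the two directions separately.

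For the ``if'' direction, assume $\Gamma$ is a forest. I would argue that any two orderings of $\mS$ are related by a sequence of (i) transpositions of adjacent commuting generators and (ii) cyclic rotations, both of which preserve conjugacy of the corresponding product (a transposition of commuting neighbours does not even change the element; a cyclic rotation conjugates it). Concretely, pick a leaf $v$ of $\Gamma$, with its unique neighbour $w$ (if $v$ is isolated, it commutes with everything). Given any ordering, I can cyclically rotate so that $v$ is at the front; then $v$ commutes with every generator except possibly $w$, so I can slide $v$ to sit immediately before $w$ — or, if $w$ precedes $v$ cyclically, slide $v$ leftward until it is adjacent to $w$, and the pair $wv$ appears. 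The point is that the relative position of $v$ with respect to $w$ is the only invariant, and by a rotation one can normalise even that. Deleting the leaf $v$ gives a forest on one fewer vertex, and I induct: any two orderings become conjugate after bringing $v$ to a canonical spot relative to $w$ and then applying the inductive hypothesis to the remaining word (which is a product of all generators of $A(\Gamma\setminus v)$ once each). The base case (one vertex, or an edgeless graph) is immediate.

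For the ``only if'' direction, suppose $\Gamma$ contains a cycle; I must exhibit two orderings of $\mS$ whose products are non-conjugate in $A(\Gamma)$. The cleanest invariant to use is the image in the abelianization — no, that is order-independent, so it is useless; instead I would project to a well-chosen quotient. If $\Gamma$ has a cycle $C = (v_1,\dots,v_k)$ with $k\ge 3$, I would map $A(\Gamma)$ onto $A(C_k)$, the right-angled Artin group of the $k$-cycle, by killing all other generators (this is a retraction onto a standard parabolic, hence injective on that parabolic and in particular well-defined). It then suffices to find two orderings of the generators of $A(C_k)$ with non-conjugate products. For $k=3$ (a triangle), $A(C_3)=F_3$ is free of rank $3$, and one checks directly that $abc$ and $acb$ are not conjugate in $F_3$ (e.g.\ they have different cyclic words, or map differently under an abelianization-with-signs trick / a homomorphism to a suitable finite group). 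For general $k$ I would either reduce to the triangle case by a further retraction or argue with the cyclic normal form in graph products: in $A(C_k)$ a product of all generators once each is cyclically reduced, and two such are conjugate iff related by cyclic permutation and shuffles of commuting letters, which fails to connect all $k!$ orderings once $k\ge 3$.

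The main obstacle is the ``only if'' direction, specifically producing a clean conjugacy invariant that distinguishes orderings in $A(C_k)$; the triangle case $F_3$ is easy and robust, so the key step is to show that the presence of \emph{any} cycle forces a retraction onto a case already known to fail, which I expect to follow by retracting a chordless cycle (every cycle contains one) onto a triangle via the parabolic-retraction trick again, at which point the $F_3$ computation finishes the argument. The ``if'' direction is routine once the leaf-peeling induction is set up correctly, the only care being to track that cyclic rotations are available to normalise the position of the leaf relative to its neighbour.
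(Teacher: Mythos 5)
Your proposal is correct in outline and reduces to Corollary \ref{cor:artin}(2) exactly as the paper does, but both directions are then handled differently. For the ``if'' direction the paper does not peel leaves: it reduces to showing that a single adjacent transposition $a_1a_2\cdots a_n \rightarrow a_2a_1a_3\cdots a_n$ preserves the conjugacy class, verifies this by hand for the right-angled Artin group on the $A_4$ path graph, and then handles a general tree by splitting $\Gamma$ along the edge $a_1a_2$ into two components, collecting the remaining letters into two mutually commuting blocks $V,W$, and mapping the $A_4$ group onto $\langle W,a_1,a_2,V\rangle$. Your leaf-peeling induction is a legitimate alternative and is arguably more self-contained (it never needs the $A_4$ computation), but the lifting step --- showing that every commutation or rotation of the word on $\Gamma\setminus v$ lifts to a move on the word with $v$ reinserted next to $w$ --- is where all the content sits and must be written out; it does go through because the block $wv$ commutes with a letter $x$ exactly when $w$ does. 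For the ``only if'' direction you actually supply more detail than the paper, which merely asserts that reversing a cycle yields a non-conjugate Coxeter element; your retraction onto a chordless cycle is the right way to make this precise. One caveat: your first suggested finish for $k\ge 4$, a ``further retraction onto a triangle,'' cannot work, since no three vertices of a chordless $k$-cycle with $k\ge 4$ induce a triangle (and quotients identifying generators land in a tree RAAG, where everything is conjugate); you must commit to the second route, the cyclic-word-plus-shuffles conjugacy criterion for cyclically reduced elements of right-angled Artin groups (available from \cite{cgw}, which the paper already invokes), under which the standard cyclic ordering of $C_k$ admits no shuffles at all and so its conjugacy class meets only its $k$ rotations.
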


\begin{proof}
By Corollary \ref{cor:artin}, it is sufficient to show the associated graph $\Gamma$ is a forest if and only if all Coxeter elements of full generator systems are conjugate. With no loss of generality, we can assume that $\Gamma$ is connected.

First of all, assume $\Gamma$ is not a tree, thus there exists a simple edge-path which forms a loop. Let $a_{1},\ldots,a_{k}$ be generators of $G$ which correspond to the vertices of the loop. Then two full $G$-systems $(a_{1},a_{2},\ldots,a_{k},a_{k+1},a_{k+2},\ldots,a_{m})$ and $(a_{k},a_{k-1},\ldots,a_{1},a_{k+1},a_{k+2},\ldots,a_{m})$ have non-conjugate Coxeter elements, hence these two $G$-systems are not HC-equivalent.

Conversely, assume $\Gamma$ is a tree. Since the cycling operation preserves the conjugacy classes, it is sufficient to show $C=a_{1}a_{2}a_{3}\cdots a_{n-1}a_{n}$ is conjugate to $C'=a_{2}a_{1}a_{3}\cdots a_{n-1}a_{n}$. 
If there is no edge connecting $a_{1}$ and $a_{2}$, then $C=C'$. Thus, we assume that there is an edge $e$ which connects $a_{1}$ and $a_{2}$.
Let $\Gamma_{i}$ $(i=1,2)$ be the connected component of the graph $\Gamma - e$ which contains the vertex $v_{i}$.

Before proving $C$ and $C'$ are conjugate, we begin with a special case. Let us consider the right-angled Artin group $A_{4}=\langle a'_{1},a'_{2},a'_{3},a'_{4} \: | \: a'_{i}a'_{j} = a'_{j}a'_{i} \;\;(|i-j| \neq 1)\rangle$ whose associated graph is the $A_{4}$-Dynkin diagram. It is easy to see all Coxeter elements of full generator systems of $A_{4}$ are conjugate.

Now we proceed to general cases.
Let us denote the word $C$ as 
\[ C= a_{1}a_{2}W_{0} V_{1}W_{1}\cdots V_{k}W_{k} \]
where $W_{i}$ (resp. $V_{i}$) is the subword of $C$ which consists of the vertices of $\Gamma_{1}$ (resp. $\Gamma_{2}$). Put $W= W_{0}\cdots W_{k}$ and $V=V_{1}\cdots V_{k}$. Since $\Gamma_{1}$ and $\Gamma_{2}$ are disconnected, $W_{i}$ commutes with $V_{j}$.
Thus, we rewrite the word $C$ as $C= a_{1}a_{2}V W$
by using the commutative relations.
Let us consider the subgroup $H$ generated by $a_{1},a_{2},V,W$ and the map $A_{4} \rightarrow H$ defined by $a'_{1}\rightarrow W$, $a'_{2} \rightarrow a_{1}$, $a'_{3} \rightarrow a_{2}$, and $a'_{4} \rightarrow V$. This defines a group homomorphism, hence by using the result on $A_{4}$-case, we conclude that $C = a_{1}a_{2}VW$ is conjugate to $ C' = a_{2}a_{1}VW $.
\end{proof}

Finally, we give another example, {\em Garside groups}. 
A {\em Garside group} is a tuple $(G,M,\Delta)$ satisfying some axioms where $G$ is a group, $M$ is a submonoid of $G$, and $\Delta \in M$. 
We do not give a precise definition of Garside groups. For details see \cite{bgm} or \cite{d1} for example.
We use the following known properties of Garside groups.

\begin{enumerate}
\item The monoid $M$ embeds into $G$.

\item Let $A=\{a_{1},\ldots,a_{m}\}$ be the set of atoms. Here an element $a \in M$ is called an {\em atom} if $b^{-1}a \not \in M$ for all $b \in M$. Then
$\Delta$ is a common left and right multiple of $A$. 
\item Let $g \in G$ and $h,h'$ and element of $M \subset G$ which are conjugate to $g$.
 If $\Delta^{-1}g \in M$, then there is a sequence of elements in $M$ 
 \[ h= g_{0} \rightarrow g_{1} \rightarrow \cdots \rightarrow g_{m} = h'\]
 where $\Delta^{-1}g_{i} \in M$ and $g_{i+1}$ is obtained from $g_{i}$ by taking a conjugation by an atom.
\end{enumerate}

The first and second properties are parts of axioms. The last property comes from the solution of the conjugacy problem in Garside groups \cite{bgm}. 
By (2), a conjugation by an atom in a sequence of (3) can be regarded as a cycling operation of words on atoms.
Thus, these properties and Lemma \ref{lem:key} lead to the following results on Hurwitz and HC-equivalences.

\begin{cor}
\label{cor:garside}
Let $(G,M,\Delta)$ be a Garside group and assume that the $M$ is an associated monoid of some finite positive presentation $\mP=\langle \mS \: | \: \mR\rangle $ of $G$ such that 
\begin{enumerate}
\item All relations in $\mR$ are Hurwitz compatible. 
\item The generating set $\mS$ is equal to the set of atoms.
\end{enumerate}

Then for generator $G$-systems $\ba$ and $\ba'$,
\begin{enumerate}
\item $\ba \sim_{H} \ba'$  if and only if $C(\ba) = C(\ba')$.
\item If $\Delta^{-1}C(\ba) \in M_{\mP}^{+}$, then $\ba \sim_{HC} \ba'$ if and only if $C(\ba)$ and $C(\ba')$ are conjugate.
\end{enumerate}
\end{cor}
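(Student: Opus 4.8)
The plan is to read off both assertions from Theorem~\ref{thm:main} and Lemma~\ref{lem:key}, feeding in the three listed properties of Garside groups. Since a Hurwitz-compatible relation is homogeneous by definition, hypothesis~(1) already makes $\mP$ a finite homogeneous positive presentation all of whose relations are Hurwitz-compatible, and by the first Garside property the associated monoid $M_{\mP}^{+}=M$ injects into $G$. Hence Theorem~\ref{thm:main}(3) applies verbatim and gives assertion~(1): $\ba\sim_{H}\ba'$ if and only if $C(\ba)=C(\ba')$.

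For assertion~(2), the forward implication is the general observation recorded in Section~3 that HC-equivalence forces the Coxeter elements to be conjugate. For the converse, assume $C(\ba)$ and $C(\ba')$ are conjugate in $G$; because $\mP$ is homogeneous the total-length homomorphism $G\to\mathbb{Z}$ is well defined, so $C(\ba)$ and $C(\ba')$, being conjugate, have the same length, and therefore $\ba$ and $\ba'$ have a common length $m$. Both Coxeter elements lie in $M$, being products of atoms, so I would invoke the third Garside property with $g=h=C(\ba)$ and $h'=C(\ba')$ --- which is legitimate since $\Delta^{-1}C(\ba)\in M_{\mP}^{+}$ and $C(\ba),C(\ba')$ are conjugate to $g$ --- to produce a chain
\[ C(\ba)=g_{0}\to g_{1}\to\cdots\to g_{k}=C(\ba'), \]
with every $g_{i}\in M$, every $\Delta^{-1}g_{i}\in M$, and $g_{i+1}=\tau_{i}^{-1}g_{i}\tau_{i}$ for suitable atoms $\tau_{i}\in\mS$. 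The crucial point is that each such conjugation is a cycling operation on words over $\mS$: from $\Delta^{-1}g_{i}\in M$ the element $\Delta$ left-divides $g_{i}$, and by the second Garside property every atom left-divides $\Delta$, so $\tau_{i}$ left-divides $g_{i}$; writing $g_{i}=\tau_{i}h_{i}$ in $M$ gives $g_{i+1}=h_{i}\tau_{i}$, i.e.\ a word $\tau_{i}V_{i}$ representing $g_{i}$ is replaced by $V_{i}\tau_{i}$ representing $g_{i+1}$, and homogeneity keeps all these words of length $m$.

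To conclude I would, at each stage, pick a length-$m$ word representing $g_{i}$ that begins with $\tau_{i}$ (possible by the divisibility just noted) and let $\ba_{i}$ be the associated generator $G$-system. Consecutive systems are then related in one of two ways: if their Coxeter words represent the same element of $M_{\mP}^{+}$ they are Hurwitz (hence HC) equivalent by the already-proved assertion~(1), and if they differ by a cycling operation they are HC-equivalent by Lemma~\ref{lem:key}(2); finally $\ba\sim_{H}\ba_{0}$ and $\ba_{k}\sim_{H}\ba'$ by assertion~(1) since these pairs share their Coxeter elements. Chaining all of these gives $\ba\sim_{HC}\ba'$. The step I expect to be the real work is precisely this last bookkeeping --- moving back and forth between monoid elements and their word representatives, re-selecting at every step a representative starting with the next atom and appealing to assertion~(1) to see that the re-selection is harmless, all while keeping lengths equal to $m$. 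Nothing here is deep, but it is where the group-level statement of the Garside conjugacy algorithm has to be translated, carefully, into a statement about $G$-systems.
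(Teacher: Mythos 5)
Your proof is correct and follows essentially the same route as the paper: the paper derives part (1) from Theorem~\ref{thm:main}(3) via the embedding $M=M_{\mP}^{+}\hookrightarrow G$, and for part (2) it invokes exactly the same three Garside properties, observing that since every atom left-divides $\Delta$ and $\Delta$ left-divides each $g_i$, each atom-conjugation in the chain is a cycling operation handled by Lemma~\ref{lem:key}(2). The paper leaves this as a brief sketch; your write-up just makes the bookkeeping (choice of word representatives and the appeal to part (1) between cycling steps) explicit.
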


\begin{exam}
A typical example of a Garside group is an Artin group of finite type $A$, together with the associated monoid $M_{\mP}^{+}$ of standard presentation $\mP= \langle \mS \: | \: \mR \rangle$ and $\Delta= \textrm{The least common multiple of } \mS$. 
For such $(A,M_{\mP}^{+},\Delta)$, the set of atom is identical with the set of standard generators $\mS$, and $(A,M_{\mP}^{+},\Delta)$ satisfies the hypothesis of Corollary \ref{cor:garside}.
Thus, for a generator system $\ba$ of an Artin group of finite type, we can solve not only the Hurwitz equivalence/search problem, but also the harder problems, HC-equivalence/search problems if $\Delta^{-1}C(\ba) \in M_{\mP}^{+}$.
\end{exam}

\section{An algorithm to solve Hurwitz equivalence and Hurwitz search problems}

In this section we present an algorithmic approach to solve the Hurwitz equivalence and Hurwitz search problems.
 
\subsection{Naive algorithm}

First of all, we provide a simple version of  an algorithm to solve Hurwitz equivalence/search problems. This naive version of algorithm still has an advantage compared to the modified algorithm which will be given in Section 4.2. The naive algorithm requires less computations but still works in special cases. More importantly, the naive algorithm stops in finite time.

Let $\mP = \langle\mS \: |\: \mR \rangle$ be a finite positive presentation of a group $G$ such that all relations in $\mR$ are Hurwitz compatible.
We typically consider the finite presentation such that all relations are word-conjugacy relations. 
We further assume that both the word and the conjugacy (search) problems of $G$ are solvable.
Let $\bg = (g_{1},\ldots,g_{m})$ be a generator $G$-system and $\bg' = (g'_{1},\ldots,g'_{m})$ be an arbitrary $G$-system. 

We try to check whether $\bg \sim_{H} \bg'$ or not as follows.

We begin with rather simple tests.
First we compare the Coxeter elements of $\bg$ and $\bg'$. If $C(\bg) \neq C(\bg')$, then $\bg \not \sim_{H}\bg'$. 
Next for each $i$, we check whether there is a permutation $\tau$ of indices such that $g_{i}'$ is conjugate to $g_{\tau(i)}$. If such a permutation does not exist, then again we conclude $\bg \not \sim_{H} \bg'$.

Assume that $\bg$ and $\bg'$ pass these two tests. 
The next step is to construct a new positive presentation $\mP' = \langle \mS' \:  | \: \mR' \rangle $ of $G$ so that both $\bg$ and $\bg'$ are generator $G$-systems with respect to the presentation $\mP'$, and all relations in $\mR'$ are Hurwitz compatible. Such a presentation is constructed as follows.

Let us denote by $g'_{i}= V_{i}^{-1} g_{\tau(i)} V_{i}$ where $V_{i}$ are some fixed words on $\mS \cup \mS^{-1}$ which are computed by solving the conjugacy problem.
Let $L(i) =l(V_{i})$, and write $V_{i}$ as 
\[ V_{i}=a_{n_{1}^{(i)}}^{\varepsilon^{(i)}_{1} }a_{n_{2}^{(i)}}^{\varepsilon^{(i)}_{2} }\cdots a_{n_{L(i)}^{(i)}}^{\varepsilon^{(i)}_{L(i)} }\]
where we put $\mS = \{a_{1},\ldots,a_{M}\}$ and $n_{k}^{(i)} \in \{1,2,\ldots,M\} $, $\varepsilon^{(i)}_{k} \in \{\pm 1\}$. 

We introduce new generators $\{g'_{1},\ldots,g'_{m}\} \cup \{g_{i,j}\}_{i=1,\ldots,m,\; j = 1,\ldots,L(i)-1}$ and new word conjugacy relations $\{\mR_{i,j}\}_{i=1,\ldots,m,\; j = 1,\ldots,L(i)} $ as follows.
For $j=1$, we define the relation $\mR_{i,1}$ as
\[ \mR_{i,1}: \left\{
\begin{array}{l}
 g_{\tau(i)} a_{n_{1}^{(i)}} \equiv a_{n_{1}^{(i)}}g_{i,1}  \;\;\;\;\; (\varepsilon^{(i)}_{1} = +1)\\
   a_{n_{1}^{(i)}} g_{\tau(i)} \equiv g_{i,1}a_{n_{1}^{(i)}} \;\;\;\;\;(\varepsilon^{(i)}_{1} = -1).
\end{array}
\right.
\] 
For $1<j<L(i)$, we define the relation $\mR_{i,j}$ as
\[ \mR_{i,j}:\left\{
\begin{array}{l}
g_{i,j-1}a_{n_{j}^{(i)}} \equiv a_{n_{j}^{(i)}}g_{i,j} \;\;\;\;\; (\varepsilon^{(i)}_{j} = +1)\\
a_{n_{j}^{(i)}}g_{i,j-1} \equiv g_{i,j}a_{n_{j}^{(i)}} \;\;\;\;\; (\varepsilon^{(i)}_{j} = -1).
\end{array}
\right.
\]
Finally, for $j=L(i)$, we define the relation $\mR_{i,L(i)}$ as
  \[ \mR_{i,j}: \left\{
\begin{array}{l} 
g_{i,L(i)-1}a_{n_{L(i)}^{(i)}} \equiv a_{n_{L(i)}^{(i)}}g'_{i} \;\;\;\;\;  (\varepsilon^{(i)}_{L(i)} = +1)\\
a_{n_{L(i)}^{(i)}} g_{i,L(i)-1}\equiv g'_{i}a_{n_{L(i)}^{(i)}}  \;\;\;\;\; (\varepsilon^{(i)}_{L(i)} = -1).
\end{array}
\right.
\]

Let us consider the new positive presentation of $G$
\[ \mP' = \langle \mS \cup \{g_{i,j} \} \: | \: \mR \cup \{\mR_{i,j}\} \rangle. \]
We call this positive presentation $\mP'$ the {\it expanded presentation}.
All of the newly-added relations $\mR_{i,j}$ are word-conjugacy relations, hence Hurwitz-compatible.

Now we reverse the word $W(\bg)^{-1} W(\bg')$ in the presentation $\mP'$.
The reversing procedure stops in finite time because the expanded presentation is finite, homogeneous.
By Theorem \ref{thm:main}, if $W(\bg)^{-1} W(\bg') \curvearrowright \varepsilon$ then we not only conclude $\bg \sim_{H} \bg'$ but also compute a braid $\beta$ such that $\bg \cdot \beta = \bg'$ via Van-Kampen diagrams.

The precise algorithm is given as Algorithm \ref{alg:htestsimple}.  Algorithm \ref{alg:htestsimple} returns {\sf Undecidable} if it fails to determine whether $\bg \sim_{H} \bg'$ or not.

\begin{algorithm}
\caption{:Hurwitz equivalence and search --  Naive algorithm}
\label{alg:htestsimple}
\begin{flushleft}
Input : A finite homogeneous positive presentation $\mP = \langle \mS\: | \:\mR\rangle$ of $G$ such that all relations in $\mR$ are Hurwitz-compatible, a generator $G$-system $\bg= (g_{1},\ldots, g_{m})$, and a $G$-system $\bg' = (g'_{1},\ldots,g'_{m})$.\\
Output: The truth value of $\bg \sim_{H} \bg'$ or {\sf Undecidable}. In case of $\bg \sim_{H} \bg'$, also return a braid $\beta$ such that $\bg \cdot \beta = \bg'$.

\end{flushleft}
\begin{enumerate}
\item If $C(\bg) \neq C(\bg')$, then return \textsf{false}.
\item  Check whether there is a permutation $\tau$ of indices such that $g_{i}'$ is conjugate to $g_{\tau(i)}$. If such a permutation does not exist, then return \textsf{false}.
\item Compute the expanded presentation $\mP'$ of $G$.
\item Check whether $W(\bg)^{-1}W(\bg') \curvearrowright \varepsilon $ or not. If not, then return \textsf{Undecidable}.
\item If $W(\bg)^{-1}W(\bg') \curvearrowright \varepsilon $, then construct a Van-Kampen diagram for $(W(\bg),W(\bg'))$ and compute a braid $\beta$ such that $\bg \cdot \beta = \bg'$ from the Van-Kampen diagram. 
\item Return \textsf{true} and the braid $\beta$.
\end{enumerate}
\end{algorithm}

\subsection{A better Algorithm to solve Hurwitz equivalence and Hurwitz search problems}

In Algorithm \ref{alg:htestsimple}, word reversing of $W(\bg)^{-1}W(\bg') $ is not sufficient to show $\bg \sim_{H} \bg'$, because word reversing might fail to detect the congruence of $W(\bg)$ and $W(\bg')$.
To improve Algorithm \ref{alg:htestsimple} we try to make the expanded presentation complete.
For a complete presentation, the word reversing always detects the congruence so it is more likely to succeed in showing $\bg \sim_{H} \bg'$. 
Moreover, as we will see in Appendix, with additional works one can also try to show the associated monoid embeds into $G$ for left and right complete presentations. Thus, one can also try to obtain the stronger results, the classification of the Hurwitz equivalence classes of generator $G$-systems by using Theorem \ref{thm:main} (3),(4).

The modified algorithm goes as follows. 
The inputs $G = \mP$, $\bg$, $\bg'$ and the first three steps are the same as in Algorithm \ref{alg:htestsimple}.
The next step is the core of the modified algorithm. We try to make the expanded presentation $\mP'$ complete. We slightly modify Algorithm \ref{alg:complete} so that it is more effective for our purposes.
Recall that in the completion procedure, we add a new relation $su \equiv tv$ if $s^{-1}rr^{-1}t \curvearrowright uv^{-1}$ but $(su)^{-1}(tv) \not \curvearrowright_{r} \varepsilon$.

We must check whether the new relation is Hurwitz-equivalent or not because we would like to use Theorem \ref{thm:main}.
Fortunately, adding the relation $su \equiv tv$ does not cause any problem.

\begin{lem}
Assume that $\mP = \langle \mS \: | \: \mR \rangle$ is a positive group presentation such that all relations in $\mR$ are Hurwitz-compatible, and take $s,r,t,u,v$ as above.
Then the relation $su \equiv tv$ is Hurwitz-compatible.
\end{lem}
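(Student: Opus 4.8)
The plan is to trace through exactly what the word reversing $s^{-1}rr^{-1}t \curvearrowright uv^{-1}$ means combinatorially and show that each reversing step corresponds to a Hurwitz move, so that the resulting relation $su \equiv tv$ inherits Hurwitz-compatibility. Recall that when we reverse $s^{-1}rr^{-1}t$, the intermediate word $r^{-1}t$ first gets processed: by the definition of word reversing, replacing a subword $a^{-1}b$ by $a'(b')^{-1}$ uses a relation $aa' \equiv bb'$ in $\mR$, and each such relation is Hurwitz-compatible by hypothesis. The key observation is that a successful reversing $s^{-1}rr^{-1}t \curvearrowright uv^{-1}$ produces a Van-Kampen-type diagram: we begin with the paths $s r$ (reading down-then-right) on one side and $t$ (and the doubled $r^{-1}r$) on the other, and the reversing fills in a grid of $2$-cells each labelled by a relation in $\mR$, together with the cancellation cells $r^{-1}r \to \varepsilon$. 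The outcome of such a diagram is a congruence $s u \equiv t v$ in $M_\mP^+$ witnessed by a sequence of elementary relation-applications, exactly as in the proof of Theorem~\ref{thm:main}(2).

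First I would make precise the following: from $s^{-1}rr^{-1}t \curvearrowright uv^{-1}$ one extracts, just as in Example~\ref{exam:van-kampen} and the discussion preceding it, a Van-Kampen diagram whose boundary is labelled $(su)^{-1}(tv)$ — here we use that the doubled letter $r^{-1}r$ is handled by cancellation cells of type~(2) in the word reversing definition, which correspond in the diagram to identifying two $1$-cells with the same label $r$. From this diagram I read off a finite sequence of words on $\mS$
\[ su = W_0 \rightarrow W_1 \rightarrow \cdots \rightarrow W_k = tv, \]
where each $W_{i+1}$ is obtained from $W_i$ by applying a single relation of $\mR$. Since every relation in $\mR$ is Hurwitz-compatible and homogeneous, Lemma~\ref{lem:key}(1) gives, for each $i$, a braid $\beta_i$ with $\bg_{W_i} \cdot \beta_i = \bg_{W_{i+1}}$ (here $\bg_{W_i}$ is the generator $G$-system whose Coxeter word is $W_i$; all these have the common length $l(su)=l(tv)$ because $\mR$, and hence the derived diagram, is homogeneous). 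Composing, $\bg_{su} \cdot (\beta_0 \beta_1 \cdots \beta_{k-1}) = \bg_{tv}$, which is precisely the statement that $su \equiv tv$ is Hurwitz-compatible, with $\beta_{su \equiv tv} = \beta_0\beta_1\cdots\beta_{k-1}$.

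The main obstacle I anticipate is purely bookkeeping rather than conceptual: one must verify carefully that the reversing of $s^{-1}rr^{-1}t$ genuinely terminates at a \emph{positive--negative} word $uv^{-1}$ with $l(u)=l(v)+2$ wait — more precisely with $l(su) = l(tv)$, so that the resulting Van-Kampen diagram has a well-defined rectangular shape and the two boundary paths $su$ and $tv$ have equal length; this is where homogeneity of $\mP$ is used crucially, since without it the diagram need not exist in the required form. A secondary point to check is that the cancellation steps $r^{-1}r \to \varepsilon$ do not disrupt the extraction of the word sequence $W_0 \to \cdots \to W_k$: diagrammatically they only glue the two copies of the edge labelled $r$, so the underlying disc and its cellular structure $\mathcal{T}_D$ remain a legitimate Van-Kampen diagram, and the argument of Theorem~\ref{thm:main}(2) applies verbatim. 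Once these two observations are in place the conclusion is immediate from Lemma~\ref{lem:key}.
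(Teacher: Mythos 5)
Your proposal is correct in substance and follows essentially the same route as the paper: build a Van-Kampen-type diagram from the reversing $s^{-1}rr^{-1}t \curvearrowright uv^{-1}$, read off a sequence of elementary applications of relations in $\mR$ taking $su$ to $tv$, and compose the braids supplied by Lemma~\ref{lem:key}(1). One small imprecision: the middle subword is $rr^{-1}$ (positive--negative), to which neither reversing rule applies, so the two $r$-edges are not identified and the resulting diagram is not literally a Van-Kampen diagram with boundary $(su)^{-1}(tv)$; rather, as the paper notes, it is two genuine Van-Kampen diagrams for $(su,rw)$ and $(rw,tv)$ sharing the path $w$, which still yields your word sequence (passing through the intermediate word $rw$) and hence the same conclusion.
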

\begin{proof}
From the reversing sequence $s^{-1}rr^{-1}t \curvearrowright uv^{-1}$, one can construct a diagram which is similar to the Van-Kampen diagram. Indeed, one can find a word $w$ such that this diagram is obtained from two Van-Kampen diagrams of $(su,rw)$ and $(rw,tv)$ by gluing along the path $w$ as shown in Figure \ref{fig:van-kampen2}. Thus, one can find a sequence of words
\[ su = W_{0} \rightarrow W_{1} \rightarrow \cdots \rightarrow W_{i}= rw \rightarrow W_{i+1} \rightarrow W_{k-1} \rightarrow W_{k} = tv\]
where each $W_{j+1}$ is obtained from $W_{j}$ by performing the relation in $\mR$. 
Thus, we can find a braid $\beta$ such that $\bg_{su} \beta = \bg_{tv}$, where $\bg_{su}$, $\bg_{tv}$ are generator $G$-systems whose Coxeter words are $su$, $tv$, so the relation $su \equiv tv$ is Hurwitz-compatible.

\begin{figure}[htbp]
 \begin{center}
\includegraphics[width=40mm]{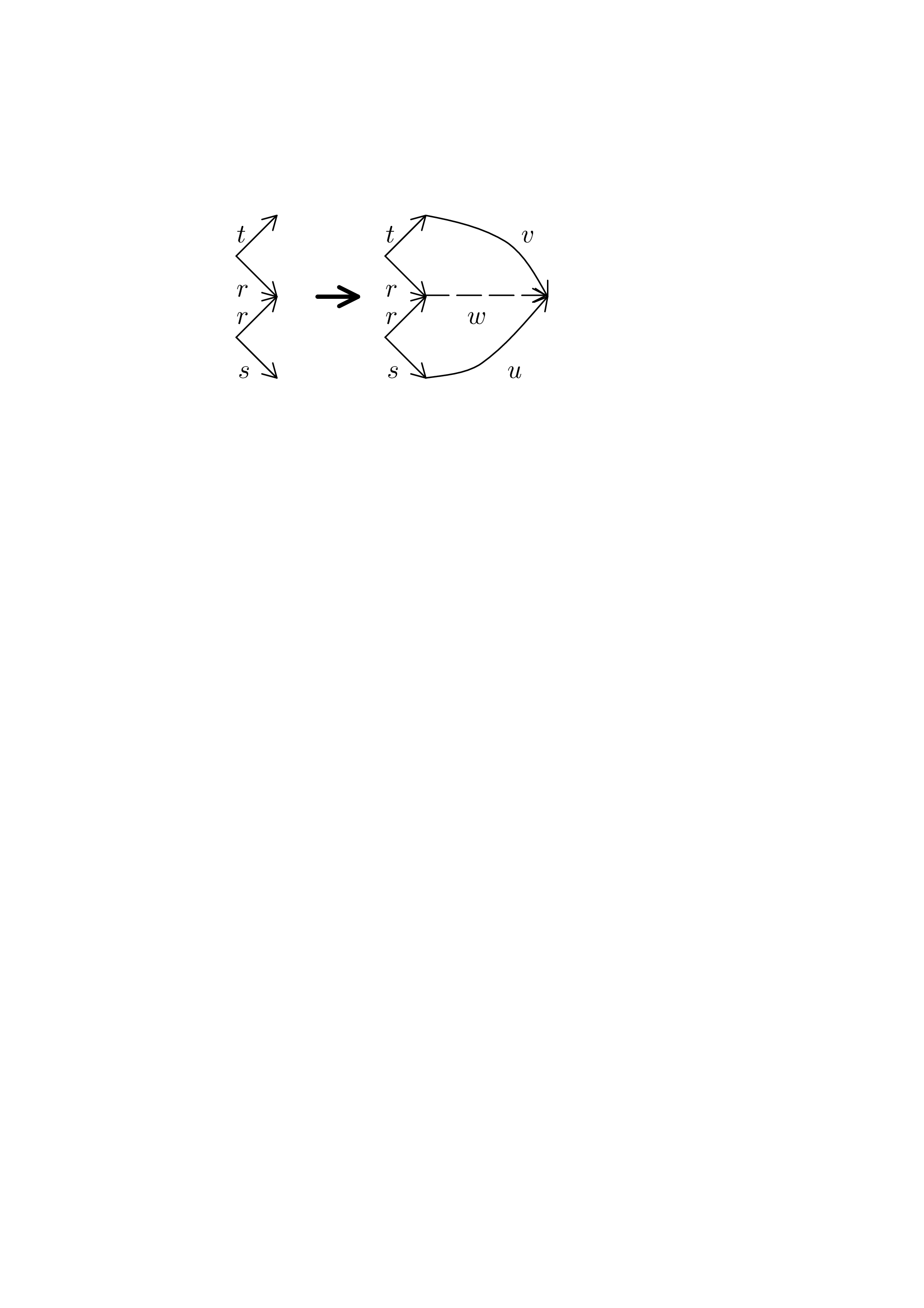}
 \end{center}
 \caption{Van-Kampen-like Diagram from word reversing $s^{-1}rr^{-1}t \curvearrowright uv^{-1}$}
 \label{fig:van-kampen2}
\end{figure}
\end{proof}

Now we consider the case $s = t$, so the relation $su \equiv tv$ is reducible. To detect the Hurwitz equivalences it is better to use finer congruence relations, so it is better to add $u \equiv v$ instead of $su \equiv tv$. Adding the relation $u \equiv v$ also makes the strong cube condition for $s,r,t,u,v$ is satisfied, because $u^{-1}s^{-1}t v \curvearrowright u^{-1}v  \curvearrowright \varepsilon$.
However, one problem occurs. We cannot expect the relation $u \equiv v$ is Hurwitz-compatible. We can add the relation $u \equiv v$ instead of $su \equiv tv$ only if we know the relation $u \equiv v$ is Hurwitz-compatible. In general we cannot know the relation is Hurwitz-compatible, except it is a word-conjugacy relations. Thus we add the relation $u \equiv v$ instead of $su \equiv tv$ if $u\equiv v$ is a word-conjugacy relation.

Summarizing, we modify the completion procedure as follows.
Assume that $s^{-1}rr^{-1}t \curvearrowright uv^{-1}$ but $(su)^{-1}(tv) \not \curvearrowright \varepsilon$. If $s \neq t$, then we add a new relation $su \equiv tv$, which is also Hurwitz-compatible.
If $s = t$, then we need to consider more. If the relation $u \equiv v$ is a word-conjugacy relation, then we add a new relation $u \equiv v$. Otherwise, we add a new relation $su \equiv tv$. The precise description of the modified completion algorithm is given as Algorithm \ref{alg:modifiedcomplete}.

\begin{algorithm}
\caption{:Modified Presentation Completion Algorithm}
\label{alg:modifiedcomplete}
\begin{flushleft}
Input : A finite positive homogeneous presentation $\mP = \langle \mS\: | \:\mR\rangle$ of a group $G$ such that all relations are Hurwitz-compatible.\\
Output: A complete presentation of $G$ such that all relations are Hurwitz-compatible.\\
\end{flushleft}
\begin{enumerate}
\item Compute all pair of words $u,v \in \mS^{*}$ such that $s^{-1}rr^{-1}t \curvearrowright uv^{-1}$ for some $s,r,t \in \mS$.
\item Check $(su)^{-1}(tv) \curvearrowright \varepsilon$ holds for all $u,v$ obtained by Step (1). Assume that $(su)^{-1}(tv) \not\curvearrowright \varepsilon$ for some $u,v$.

\begin{enumerate}
\item If $s \neq t$, 
 then replace the presentation $\mP$ with the new presentation 
\[ 
\langle \mS\: | \:\mR \cup \{ su \equiv tv\} \rangle
\]
and go back to Step (1).
\item If $s=t$, then  replace the presentation $\mP$ with the new presentation 
\[ \left\{\begin{array}{ll}
\langle \mS\: | \:\mR \cup \{ u \equiv v\} \rangle & \textrm{ If } u \equiv v \textrm{ is a word-conjugacy relation} \\
\langle \mS\: | \:\mR \cup \{ su \equiv tv\} \rangle & \textrm{ If } u \equiv v \textrm{ is not a word-conjugacy relation} \\
\end{array}\right.
\]
and go back to Step (1).
\end{enumerate}
\item Stop.
\end{enumerate}
\end{algorithm}

Suppose an Algorithm \ref{alg:modifiedcomplete} terminates and we obtained a complete finite presentation $\overline{\mP'}$. It should be noted that the monoids $M_{\mP'}^{+}$ and $M_{\overline{\mP'}}^{+}$ might be different unlike the usual completion procedure described in Algorithm \ref{alg:complete}.

The rest of steps are the same as the previous algorithm. We reverse the word $W(\bg)^{-1} W(\bg')$ by using the complete presentation $\overline{\mP'}$. 
If $W(\bg)^{-1} W(\bg') \curvearrowright \varepsilon$, then we conclude $\bg \sim_{H} \bg'$ and compute a braid $\beta$ such that $\bg \cdot \beta = \bg'$ via Van-Kampen diagram.

The explicit description of the above algorithm is given as Algorithm \ref{alg:htest}. Algorithm \ref{alg:htest} solves the Hurwitz equivalence problem if possible and returns the value {\sf Undecidable} if it fails to solve. As in Algorithm \ref{alg:htestsimple}, {\sf Undecidable} simply means we can not solve the problem using this algorithm, so it does not imply the problem is undecidable.

\begin{algorithm}
\caption{:Hurwitz equivalence and search -- modified algorithm}
\label{alg:htest}
\begin{flushleft}
Input : A finite positive group presentation $\mP = \langle \mS\: | \:\mR\rangle$ of $G$ such that all relations in $\mR$ are Hurwitz-compatible, a generator $G$-system $\bg= (g_{1},\ldots, g_{m})$, and a $G$-system $\bg' = (g'_{1},\ldots,g'_{m})$.\\
Output: The truth value of $\bg \sim_{H} \bg'$, {\sf Undecidable}. In case of $\bg \sim_{H} \bg'$, then also return a braid $\beta$ such that $\bg \cdot \beta = \bg'$ 

\end{flushleft}
\begin{enumerate}
\item If $C(\bg) \neq C(\bg')$, then return \textsf{false}.
\item Check whether there is a permutation $\tau$ of indices such that $g_{i}'$ is conjugate to $g_{\tau(i)}$. If such a permutation does not exist, then return \textsf{false}.
\item Compute the expanded presentation $\mP'$ of $G$.

\item Make the expanded presentation $\mP'$ complete by using modified completion procedure (Algorithm \ref{alg:modifiedcomplete}).

\item Check whether $W(\bg)^{-1} W(\bg') \curvearrowright \varepsilon$ or not. If $W(\bg)^{-1}W(\bg') \not\curvearrowright \varepsilon$, then return \textsf{undecidable}. 
\item Compute a Van-Kampen diagram for $(W(\bg), W(\bg'))$ using word-reversing.
\item Calculate a braid $\beta$ such that $\bg \cdot \beta = \bg'$ by using the Van-Kampen diagram.
\item Return \textsf{true} and the braid $\beta$.
\end{enumerate}
\end{algorithm}

\begin{rem}
The choice of the expanded presentation $\mP'$ in Algorithm \ref{alg:htestsimple} and \ref{alg:htest} is not unique.
We can use any finite homogeneous positive presentation $\mP''$ of $G$ whose generating set contains $\{g_{1},\ldots, g_{m},g'_{1},\ldots,g'_{m}\}$ and all of whose relations are Hurwitz compatible. Thus, there are many candidates of expanded presentations. Therefore, Algorithm \ref{alg:htestsimple} and Algorithm \ref{alg:htest} have many variations, and some of their variations might be able to solve Hurwitz equivalence problems even if the original Algorithm \ref{alg:htestsimple} and \ref{alg:htest} fail.
\end{rem}

\begin{exam}

Let $G= B_{3}= \langle x,y \: | \: xyx \equiv yxy \rangle$ be the $3$-string braid group with the standard presentation. Let us try to solve Hurwitz equivalence/search problems for two $G$-systems $\bg=(x,x,y,x)$ and $\bg'= (y^{-1}xy,x, y, y^{-1}xy)$ using Algorithm \ref{alg:htest}. 

First we introduce a new generator $z$ and a new word conjugacy relation $yz \equiv xy$, and obtain the expanded presentation  
\[ \mP' = \langle x,y,z \: | \:  xyx \equiv yxy, \: yz \equiv xy \rangle. \] 
Observe that $(xxyx)^{-1}(zxyz) \not \curvearrowright \varepsilon$ in the presentation $\mP'$, because there are no relations of the form $x \cdots \equiv z\cdots$. Hence the naive algorithm, Algorithm \ref{alg:htestsimple} returns {\sf Undecidable}.
Moreover, as we observed in Example \ref{exam:completion}, the usual completion algorithm, Algorithm \ref{alg:complete} does not terminate.

 Now let us apply a modified completion procedure, Algorithm \ref{alg:modifiedcomplete}.
As we have seen in Example \ref{exam:completion}, 
$y^{-1}xx^{-1}y \curvearrowright xyx^{-1}z^{-1}$, but $(yxy)^{-1}yzx \not \curvearrowright \varepsilon$. The relation $yxy \equiv yzx$ is reducible and the reduced relation $xy \equiv zx$ is a word-conjugacy relation. Thus, we add the new relation $xy \equiv zx$ to $\mP'$, and get the presentation
\[ \mP_{1} =  \langle x,y,z \: | \:  xyx \equiv yxy, \: yz \equiv xy \equiv zx \rangle \]

As we have seen in Example \ref{exam:completion}, the presentation $\mP_{1}$ is complete, hence we arrived at the complete presentation $\overline{\mP} = \mP_{1}$. 

Finally, we reverse the word $(xxyx)^{-1}(zxyz)$. As we have seen in Example \ref{exam:van-kampen},  $(xxyx)^{-1}(zxyz) \curvearrowright \varepsilon$ and we get the Van-Kampen diagram of $(xxyx,zxyz)$ (See Figure \ref{fig:van-kampen} again). From the Van-Kampen diagram, we obtain the sequence of words 
\[ xxyx \rightarrow xyxy \rightarrow zxxy \rightarrow zxyz \]
Thus by considering the corresponding braid actions, we conclude that 
\[ \bg \cdot (\sigma_{2}\sigma_{3})(\sigma_{1}^{-1})(\sigma_{3}) = \bg' \]

We also remark that by Example \ref{exam:embeds} given in Appendix, $G_{\overline{\mP}}^{+}$ embeds in $G$. Thus, in this case we actually obtained stronger results, that is, the classification of Hurwitz equivalence classes whose entries are $\{x,y,z\}$.
\end{exam}

\subsection{Remarks on general cases}

We close the paper by giving various remarks to apply our algorithms in general cases.

First of all, results in Section 3 are useful to study Hurwitz equivalence not only for an Artin group, but also for a general group $G$. For an arbitrary $G$-systems $\bg,\bg'$ one can always find a surjective homomorphism from an Artin group $A$ to $G$ and generator $A$-systems $\ba,\ba'$ such that $\ba,\ba'$ are mapped to $\bg,\bg'$.
Thus, if $\ba \sim_{H} \ba'$, which is easily checked by Corollary \ref{cor:artin}, then $\bg \sim_{H} \bg'$. Moreover, if $A$ is a right-angled or finite type Artin group, then sometimes we can show $\bg \sim_{HC} \bg'$.

This kind of a ``lifting" argument is useful to apply our algorithms for general groups.
To try to show $G$-systems $\bg$ and $\bg'$ are Hurwitz equivalent, we consider another group $\widetilde{G}$ with a finite positive homogeneous presentation $\mP = \langle \mS \: | \: \mR \rangle$, a homomorphism $\pi: \widetilde{G} \rightarrow G$, and generator $\widetilde{G}$-systems $\widetilde{\bg}$ and $\widetilde{\bg'}$ such that:
\begin{enumerate}
\item The presentation $\mP = \langle \mP \: | \: \mR \rangle$ satisfies the hypothesis to run Algorithm \ref{alg:htestsimple} or Algorithm \ref{alg:htest}.
\item $\pi$ sends $\widetilde{\bg}, \widetilde{\bg'}$ to $\bg, \bg'$.
\end{enumerate}
Such a group $\widetilde{G}$ can be found, for example, by searching word-conjugacy relations in the entries of $\bg$, $\bg'$. Then we use Algorithm \ref{alg:htestsimple} or Algorithm \ref{alg:htest} to try to show $\widetilde{\bg} \sim_{H} \widetilde{\bg'}$. If Algorithm \ref{alg:htestsimple} or Algorithm \ref{alg:htest} show that  $\widetilde{\bg} \sim_{H}\widetilde{\bg'}$, then we conclude $\bg \sim_{H} \bg'$.

We also remark that the step (2) in Algorithm \ref{alg:modifiedcomplete} can be simplified.
Since the presentation we are considering is homogeneous, so detecting the congruence of Coxeter words $W(\bg)$ and $W(\bg')$ we do not need all relations. It is sufficient to know the relations of length $\leq l$ where $l$ be the length of $\bg$.
Thus, in the step (2) of Algorithm \ref{alg:modifiedcomplete}, if the length of newly-added relations $su \equiv tv$ or $u \equiv v$ become bigger than the length of $\bg$, then we can stop the completion procedure.

Combining these tricks with a ``lifting" argument provides an algorithmic approach to try to show $\bg \sim_{H} \bg'$ for arbitrary $G$-systems $\bg$ and $\bg'$.

\section*{Appendix: Embeddability of the associated monoid}

In Appendix, we give an algorithm to try to show the associated monoid embeds into $G$ which is described in \cite{d2}. 

The word reversing which we used in this paper is actually called a {\it right} word reversing. 
In a similar way, a {\em left} word reversing and the notion of a {\em left} complete presentation are defined. All results described in section 2 holds for left word reversing and left completions as well.
In particular, by modifying Algorithm \ref{alg:complete} or Algorithm \ref{alg:modifiedcomplete} appropriately, one can obtain an algorithm to try to make a finite homogeneous positive presentation left complete, or both right and left complete.

For a finite homogeneous presentation which is both right and left complete, there is a useful criterion for the embeddability of the associated monoid.
\begin{thm}[\cite{d2}, Proposition 7.1]
\label{thm:embed}
Let $\mP=\langle \mS\: | \:\mR\rangle$ be a finite homogeneous presentation which is both left and right complete.
Assume that $\mP$ satisfies the following two conditions $(C)$ and $(E_{r})$.
\begin{itemize}
\item[($C$):] $\mR$ contains no reducible relations.
\item[($E_{r}$):] There exists a set of words $\mS' \subset \mS^{*}$ which contains $\mS$, and for all $u,v \in \mS'$, there exist $u',v' \in \mS'$ such that $(uu')^{-1}(vv')\curvearrowright \varepsilon$.
\end{itemize}
Then the associated monoid $M_{\mP}^{+}$ embeds in $G$.
\end{thm}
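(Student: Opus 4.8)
The statement is an incarnation of Ore's theorem: a cancellative monoid in which any two elements admit a common right multiple embeds in its group of right fractions. So the plan is to reduce the claim to two properties of the associated monoid $M_{\mP}^{+}$ --- that it is cancellative, and that any two of its elements have a common right multiple --- and then to identify the group of right fractions of $M_{\mP}^{+}$ with $G$. The last identification is formal: $G$ is by definition the group presented by $\langle \mS \: | \: \mR \rangle$, hence it is the group completion (enveloping group) of $M_{\mP}^{+}$; once $M_{\mP}^{+}$ is known to be a cancellative Ore monoid, its group completion coincides with its group of right fractions $Q$, and the canonical map $M_{\mP}^{+} \to G \cong Q$ is the embedding we want.

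For the common-multiple property I would use condition $(E_{r})$ directly. For two generators $a,b \in \mS$, since $\mS \subseteq \mS'$, condition $(E_{r})$ supplies $a',b' \in \mS'$ with $(aa')^{-1}(bb') \curvearrowright \varepsilon$; by the basic property of word reversing recalled in Section 2 (\cite[Proposition 1.9]{d2}), this gives $aa' \equiv bb'$ in $M_{\mP}^{+}$, i.e.\ a common right multiple. One then propagates to all of $M_{\mP}^{+}$ by regarding $\mS'$ as an auxiliary alphabet mapping into $M_{\mP}^{+}$ and running a short double induction on the length of words over $\mS'$: first, every word over $\mS'$ and every single letter of $\mS'$ have a common right multiple which is again a word over $\mS'$ (induction on the length of the word, peeling off the leading letter and combining with the letter--letter case); then, any two words over $\mS'$ have a common right multiple (induction on the length of the second word, combining the previous step with the inductive hypothesis). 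Since $\mS \subseteq \mS'$ generates $M_{\mP}^{+}$, this yields the right Ore condition. This argument uses only associativity in $M_{\mP}^{+}$, so it is independent of cancellativity.

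Cancellativity is where completeness and condition $(C)$ enter, and I expect it to be the main obstacle. Word reversing in a finite homogeneous presentation always terminates, and for a complete presentation the outcome of reversing is confluent up to $\equiv$; condition $(C)$ rules out reducible relations, which would otherwise let a pattern $s^{-1}s$ be rewritten in two incompatible ways and spoil this confluence. Granting the confluence, left cancellativity follows: if $as \equiv at$ with $a \in \mS$ and $s,t \in \mS^{*}$, then $(as)^{-1}(at) = s^{-1}a^{-1}at \curvearrowright \varepsilon$ by completeness; performing the deletion $a^{-1}a$ first and invoking confluence forces $s^{-1}t \curvearrowright \varepsilon$, hence $s \equiv t$, and iterating over the letters of an arbitrary left factor gives left cancellativity in general. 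Right cancellativity is obtained in the same way, but now using that $\mP$ is also \emph{left} complete and working with left word reversing (deleting the pattern $aa^{-1}$ on the right first). The delicate point throughout is the confluence of reversing for complete presentations: it is precisely the content the completeness hypothesis is there to supply, and making the "cancel $a^{-1}a$ first without loss of generality" step legitimate is the real work.

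With cancellativity and common right multiples in hand, Ore's theorem gives $M_{\mP}^{+} \hookrightarrow Q \cong G$, which is the desired embedding. As a byproduct one sees that for such a presentation word reversing actually computes least common right multiples in $M_{\mP}^{+}$, which is what makes the word-reversing machinery of Section 2 effective in this setting.
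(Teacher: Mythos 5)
Your decomposition --- cancellativity from completeness plus condition $(C)$, the right Ore condition from $(E_{r})$ via the grid/double-induction argument, then Ore's theorem together with the identification of the group of right fractions with the enveloping group $G$ --- is exactly the route the paper indicates: the paper does not prove this theorem itself but cites \cite[Proposition 7.1]{d2}, and the sentence following the statement attributes cancellativity to $(C)$ (\cite[Corollary 6.2]{d2}) and the common right multiple property to $(E_{r})$ (\cite[Proposition 6.7]{d2}), so your proposal reconstructs the intended proof. One clarification on the step you flag as delicate: no confluence property of reversing is needed for left cancellativity. In the word $s^{-1}a^{-1}at$ the block $s^{-1}a^{-1}$ consists entirely of negative letters and $at$ entirely of positive ones, so the \emph{only} place a reversing step can apply is the juncture $a^{-1}a$; condition $(C)$ excludes any relation of the form $au'\equiv av'$, so the first step of any reversing sequence is forced to be the deletion of $a^{-1}a$, and a terminating sequence witnessing $s^{-1}a^{-1}at\curvearrowright\varepsilon$ therefore restricts to one witnessing $s^{-1}t\curvearrowright\varepsilon$. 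This replaces your appeal to confluence (which is not what completeness asserts) by a forced-first-move observation, and the symmetric argument with left reversing and the other half of $(C)$ gives right cancellativity.
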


The condition ($C$) implies the associated monoid $M_{\mP}^{+}$ is cancellative \cite[Corollary 6.2]{d2}, and the condition ($E_{r}$) implies $M_{\mP}^{+}$ admits a common right multiple \cite[Proposition 6.7]{d2}. 
Using this, one can try to show the embeddability of the associated monoid algorithmically by using Algorithm \ref{alg:embed} below. As in Algorithm \ref{alg:complete}, Algorithm \ref{alg:embed} never stops if $M_{\mP}^{+}$ does not embed in $G$.

\begin{rem}
Algorithm \ref{alg:embed} we present here checks not the condition $(E_{r})$, but the stronger condition $(E'_{r})$ below (See \cite[Remark 6.9]{d2}).
 \begin{itemize}
\item[($E'_{r}$)]: There exists a set of words $\mS' \subset \mS^{*}$ which contains $\mS$, and for all $u,v \in \mS'$, if $u^{-1}v\curvearrowright u'v'^{-1}$, then $u',v' \in \mS'$.
\end{itemize}
($E'_{r}$) implies ($E_{r}$) but in general ($E_{r}$) does not imply ($E'_{r}$).
\end{rem}

\begin{algorithm}
\caption{: Monoid embedding test}
\label{alg:embed}
\begin{flushleft}
Input : A finite homogeneous presentation $\mP = \langle \mS\: | \:\mR\rangle$ of a group $G$ which is both right and left complete and satisfies the condition $(C)$. \\
Output: Return {\sf true} if $M_{\mP}^{+}$ embeds into $G$.\\
\end{flushleft}
\begin{enumerate}
\item Put $\mS'= \mS$.
\item Compute all pairs of words $u',v' \in \mS^{*}$ such that $u^{-1} v\curvearrowright  u' v'^{-1}$ for some $u,v \in \mS'$ and let $\mS'' = \mS' \cup \{u',v'\}$.
\item If $\mS'' = \mS'$, then return {\sf true}. Otherwise, put $\mS'=\mS''$ and go back to step (2).
\end{enumerate}
\end{algorithm}

\begin{exam}
\label{exam:embeds}
Let us consider a presentation $\mP_{1}$ of $B_{3}$ in Example \ref{exam:van-kampen}. 
This presentation is both right and left complete, and contains no reducible relations.
By running Algorithm \ref{alg:embed}, we finally arrive at the set
\[ \mS'' = \{ 1,x,y,z, xy,yz,zx,xyx,yxy, yzx, zxx, yyz \} \]
which is closed under word reversing. That is, for any $u,v \in \mS''$, if $u^{-1}v \curvearrowright u' v'^{-1}$ then $u',v' \in \mS''$. Thus, the associated monoid $M_{\mP_{1}}^{+}$ embeds into $B_{3}$.
\end{exam}

\end{document}